\theoremstyle{plain}
\newtheorem{prop}[theorem]{Proposition}
\theoremstyle{definition}
\newtheorem{defn}[theorem]{Definition}
\newcommand{\scYv}{{\check{\scY}}}
\newcommand{\cYv}{{\check{\scY}}}
\newcommand{\sm}{\mathrm{sm}}
\newcommand{\HW}{\mathop{HW}\nolimits}
\newcommand{\HF}{\mathop{HF}\nolimits}
\newcommand{\ord}{\operatorname{ord}}
\title{Dual torus fibrations and \\
homological mirror symmetry for $A_n$-singularities}
\author{Kwokwai Chan and Kazushi Ueda}
\date{}
\begin{document}

\maketitle

\begin{abstract}
We study homological mirror symmetry
for not necessarily compactly supported coherent sheaves on
the minimal resolutions of $A_n$-singularities.
An emphasis is put on the relation with the Strominger-Yau-Zaslow conjecture.
\end{abstract}

\section{Introduction}

Let $Y$ be the affine hypersurface
\begin{equation} \label{eq:Y}
 Y=\lc (z, u, v)\in \bCx \times \bC^2 \relmid u v= z^{-1} f(z) \rc,
\end{equation}
where
$
 f(z) = (z - a_0) (z - a_1) \cdots (z - a_n)
$
is a polynomial of degree $n+1$ with
mutually distinct positive real zeros
$
 0 < a_0 < a_1 < \dots < a_n.
$
We equip $Y$ with the symplectic form
\begin{equation*}
 \omega = - \frac{\sqrt{-1}}{2}
  \left. \lb
   \frac{dz \wedge d\bar{z}}{|z|^2}
    + du \wedge d\ubar + dv \wedge d\vbar
  \rb \right|_Y.
\end{equation*}
The projection
\begin{align} \label{eq:Y-conic}
 \pi : Y \to \bCx, \quad
 (z, u, v) \mapsto z
\end{align}
is a conic fibration whose discriminant is given by the zeros
$
 \Delta = \{ a_0, \dots, a_n \}
$
of the polynomial $f$.
Using this, one can show that the map
\begin{align}
 \rho : Y \to B, \quad
  (z, u, v) \mapsto \lb \log |z|,
   \frac{1}{2}(|u|^2 - |v|^2) \rb
      \label{eq:Y-SYZ}
\end{align}
is a Lagrangian torus fibration over the base $B = \bR^2$, whose discriminant locus is given by
$$
 \Gamma := \{ (s_0, 0), (s_1, 0), \dots, (s_n, 0) \},
$$
where
$
 s_i = \log a_i
$
for $i = 0, \dots, n$. Each fiber $L$ of this Lagrangian torus fibration is {\em special} in the sense that one has
$$
 \left. \Im \lb e^{\sqrt{-1} \theta} \Omega \rb \right|_L = 0
$$
for some $\theta \in \bR$, where
\begin{align} \label{eq:Y-Omega}
 \Omega = \Res \frac{d z \wedge d u \wedge d v}{z u v - f(z)}
  = d \log z \wedge  d \log u
\end{align}
is a nowhere-vanishing holomorphic 2-form on Y.

Strominger, Yau, and Zaslow
\cite{Strominger-Yau-Zaslow}
conjectured that
any Calabi-Yau manifold admits a special Lagrangian torus fibration,
and its mirror is obtained as the dual torus fibration.
In this paper,
we apply their ideas on the fibration \eqref{eq:Y-SYZ},
which we will refer to as the {\em SYZ fibration}.

Given the SYZ fibration $\rho : Y \to B$,
one can equip the complement $B^\sm := B \setminus \Gamma$
of the discriminant
with two tropical affine structures.
One is called the {\em symplectic affine structure},
and the other is called the {\em complex affine structure}.
Here a manifold with a {\em tropical affine structure} is a manifold
which is obtained by gluing open subsets of $\bR^m$
by the action of the affine linear group
$\bR^m \rtimes \GL_m(\bZ)$
(or $\bR^m \rtimes \SL_m(\bZ)$ if the manifold is oriented).
An {\em integral affine manifold} is the special case
when the gluing maps belong to $\bZ^m \rtimes \GL_m(\bZ)$.
The symplectic affine structure is defined by
first taking a basis $\{ \gamma_1, \gamma_2 \}$
of the space of local sections
of the relative homology bundle
$(R^1 \rho_* \bZ)^\vee$
and integrating the symplectic form $\omega$
along these cycles to obtain 1-forms on $B$;
$
 d x_i = \int_{\gamma_i} \omega.
$
Local affine coordinate on $B^\sm$
are primitives of these 1-forms.
The complex affine structure is defined similarly
by using $\Im (e^{\sqrt{-1} \theta} \Omega)$
instead of $\omega$,
and is {\em Legendre dual}
to the symplectic affine structure
\cite{MR1876068}.

Following earlier works
(cf.~e.g.~\cite{Fukaya_MMTAAMS,
MR2181810,
Gross-Siebert_RAGCG,
Auroux_MSTD,
Auroux_SLFWCMS,
CLL11,
Abouzaid-Auroux-Katzarkov_LFB}
and references therein),
the mirror $\Yv$ of $Y$ is identified
in \cite{Chan_HMSARTD}
with the complement of an anti-canonical divisor
in the minimal resolution
of the $A_n$-singularity.
This mirror $\Yv$ admits a special Lagrangian torus fibration,
which is an {\em SYZ mirror}
in the sense that the symplectic and complex affine structures
are interchanged
between $Y$ and $\Yv$.

Let $E_1, \ldots, E_n \subset \Yv$
be the irreducible components
of the exceptional divisor
in the minimal resolution.
Then there is an isomorphism
\begin{align} \label{eq:PicYv}
\begin{array}{cccc}
 \deg : & \Pic \Yv & \to & \bZ^n \\
  & \vin & & \vin \\
  & \scL & \mapsto & (\deg \scL|_{E_i})_{i=1}^n
\end{array}
\end{align}
of abelian groups.
We write the line bundle associated with $\bsd \in \bZ^n$ as $\scL_\bsd$.

Given an SYZ fibration,
it is expected that Lagrangian sections of the original manifold
and holomorphic line bundles on the mirror manifold
are related by a kind of Fourier transform
\cite{Arinkin-Polishchuk_FCFT, Leung-Yau-Zaslow_SLHYM},
which we refer to as the {\em SYZ transformation}.
We introduce the notion of a {\em strongly admissible path}
in $\bCx \setminus \Delta$,
and associate an exact Lagrangian section
$L_\gamma \subset Y$
of the SYZ fibration \eqref{eq:Y-SYZ}
to each strongly admissible path.
The {\em winding number}
$
 \bsw(\gamma) = (w_1(\gamma), \dots, w_n(\gamma)) \in \bZ^n
$
of a strongly admissible path is defined
as the intersection numbers
with the closed intervals $[a_{i-1}, a_i]$ for $i = 1, \dots, n$.

The first main result in this paper
is the following:

\begin{theorem} \label{th:main1}
For a strongly admissible path $\gamma$, the SYZ transformation of the Lagrangian $L_\gamma$ is given by the line bundle $\scL_{-\bsw(\gamma)}$.
\end{theorem}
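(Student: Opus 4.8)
The plan is to compute the SYZ transformation of $L_\gamma$ over the base $B$ using the explicit description of the dual fibration $\check\rho \colon \Yv \to B$ and of the two affine structures on $B^\sm$ obtained in \cite{Chan_HMSARTD}, and to extract the degree of the resulting line bundle along each $E_i$ from its transition functions. Recall that the SYZ transformation is the fibrewise Fourier--Mukai transform: over a contractible chamber $U \subset B^\sm$, action--angle coordinates identify $\rho^{-1}(U) \cong U \times (\bR/\bZ)^2$ and its dual $\check\rho^{-1}(U) \cong U \times (\bR/\bZ)^2$; an exact Lagrangian section of $\rho$ over $U$ is the graph of an exact $1$-form $d h_U$, and its transform is the trivial bundle on $\check\rho^{-1}(U)$ with the flat unitary connection whose connection form is built from $h_U$, which the complex affine structure and the fibre coordinates upgrade to a holomorphic line bundle. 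Globally, the charts over different chambers are glued by transition functions coming from the jumps of $h_U$ across the walls of $B^\sm$, and the theorem amounts to identifying these transition functions with those of $\scL_{-\bsw(\gamma)}$ on $\Yv$.

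First I would make the affine geometry of $(B^\sm, \rho)$ explicit from the conic fibration \eqref{eq:Y-conic}. Over an arc in $\bCx \setminus \Delta$ one has a basis of vanishing $1$-cycles --- the class of $\arg z$ and the class of a circle in the conic fibre --- relative to which $\bigl(\log|z|, \tfrac12(|u|^2 - |v|^2)\bigr)$ are, up to an affine change, the symplectic action coordinates; the essential structural fact is that the affine monodromy around each discriminant point $(s_i, 0)$ is the standard nodal shear, so $B^\sm$ is cut by $n+1$ walls, one half-line from each $s_i$, across which the affine structures jump in the prescribed way. I would then describe $L_\gamma$ in this language: it projects onto $\gamma$ under the conic fibration and meets each smooth conic fibre over $\gamma$ in a real line, so over $B^\sm$ it is the graph of an exact $1$-form $dh$, and the phase data carried by $dh$ winds, as one encircles $(s_i,0)$ and reads against the nodal monodromy, precisely by the intersection number $w_i(\gamma)$ of $\gamma$ with $[a_{i-1},a_i]$ (the interval $[a_{i-1},a_i]$ being the matching path for the vanishing cycle that collapses at the two ends of the base segment joining $(s_{i-1},0)$ and $(s_i,0)$).

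Next I would import from \cite{Chan_HMSARTD} the chart description of $\Yv$ and $\check\rho$: $\Yv$ is glued from $\bCx$-type charts over the chambers of $B^\sm$ (the toric charts of the minimal resolution, with the standard $A_n$ transition maps), and $E_i$ is the closure of $\check\rho^{-1}$ of the open segment from $(s_{i-1},0)$ to $(s_i,0)$ --- an $S^1$-fibration over that segment collapsing at both ends, hence a $\mathbb{P}^1$. Applying the fibrewise Fourier transform to the local data of $L_\gamma$ from the previous step produces one holomorphic transition function per wall; by construction these extend over the charts containing the singular fibres of $\check\rho$, so they assemble to a line bundle $\scL$ on $\Yv$ (that Lagrangian sections, unlike general Lagrangians, give a correction-free transform is what makes this step clean). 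Finally, $\deg\scL|_{E_i}$ is the Chern number of the clutching function of $\scL$ along $E_i \cong \mathbb{P}^1$, which the transform expresses through the holonomies of the dual connection around the vanishing cycles collapsing at $(s_{i-1},0)$ and $(s_i,0)$; by the previous paragraph this equals $-w_i(\gamma)$, the sign being forced by the orientation convention in the Fourier--Mukai kernel. Hence $\scL \cong \scL_{-\bsw(\gamma)}$.

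The step I expect to be the main obstacle is this last matching: reconciling the signs and normalisations of the fibrewise Fourier transform with the explicit $A_n$ gluing cocycle of $\Yv$ in \cite{Chan_HMSARTD}, and verifying that the connection produced over $B^\sm$ has integral, monodromy-compatible behaviour near each $(s_i,0)$ so that $\scL$ genuinely extends across the singular fibres of $\check\rho$ without changing its restriction to the $E_i$. A related point needing care is well-definedness of the correspondence: the passage from a strongly admissible path $\gamma$ to the Lagrangian $L_\gamma$ involves auxiliary choices (a lift of the phase data, a Hamiltonian isotopy making the result an exact section), and one must check that different choices yield Lagrangians with isomorphic SYZ transforms.
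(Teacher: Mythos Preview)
Your proposal is correct in outline and lands on the same identification, but the paper's argument is substantially more direct than the transition-function/clutching route you sketch, and thereby avoids exactly the difficulties you flag at the end.

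The paper also writes $L_\gamma$, via Duistermaat's global action--angle chart, as the graph of $(\xi_1,\xi_2)$ in $T^*B^{\mathrm{sm}}/\Lambda^\vee$ and transforms it to the connection $\check\nabla = d + 2\pi\sqrt{-1}(\xi_1\,dy_1+\xi_2\,dy_2)$. But instead of extracting the degree from a gluing cocycle across walls, it computes it by a one-line Chern--Weil integral: since $E_i$ sits over the segment $(s_{i-1},s_i)\times\{0\}$ with $y_2$ constant, one has
\[
\deg\mathcal L_\gamma|_{E_i}
=\int_{E_i}\tfrac{\sqrt{-1}}{2\pi}F_{\check\nabla}
=-\int_{E_i} d\xi_1\wedge dy_1
=-(\xi_1(s_i)-\xi_1(s_{i-1})).
\]
So the whole problem reduces to reading off the increment of the single angle coordinate $\xi_1$ along the interval $[s_{i-1},s_i]$. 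There is no need to track the $A_n$ gluing cocycle, nor to analyse extension across the nodal fibres separately.

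The identification of that increment with $w_i(\gamma)$ is then done by a clean homotopy trick that you do not isolate: since both the Hamiltonian isotopy class of $L_\gamma$ and the winding numbers depend only on the relative homotopy class of $\gamma$, one may deform $\gamma$ on the annulus $|z|\in(a_{i-1},a_i)$ to (an approximation of) the negative real axis $\gamma_0$ concatenated with $w_i(\gamma)$ full loops around the circle $|z|=a_i$. Along $\gamma_0$ one has $\xi_1\equiv 0$, and each loop contributes exactly $+1$ to $\xi_1$; hence $\xi_1(s_i)-\xi_1(s_{i-1})=w_i(\gamma)$ on the nose, sign included. This replaces your monodromy/clutching computation and simultaneously disposes of the well-definedness concern you raise.

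In short: same SYZ framework, but the paper trades your \emph{transition-function} viewpoint for a \emph{curvature-integral} viewpoint, and trades your direct phase-winding count for a \emph{deform-to-standard-form} argument. Your route would work, but is longer and carries the bookkeeping burden you yourself identify; the paper's route sidesteps it.
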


Next we consider another symplectic manifold defined by
\begin{align} \label{eq:Y'1}
 Y' = \lc (z, u, v) \in \bCx \times \bC^2 \relmid u v = \frac{1}{z} + z^n \rc.
\end{align}
Note that $Y'$ is related to $Y$
by moving $a_i$ to the roots of unity,
and hence is symplectomorphic to it.
The map \eqref{eq:Y-SYZ} gives
a special Lagrangian torus fibration on $Y'$,
whose discriminant consists only of the origin.

The mirror $\scYv$ for $Y'$ is the smooth stack
obtained by removing an anti-canonical divisor
from the total space $\scK$ of the canonical bundle of the weighted projective line $\bP(1, n)$.
Although the McKay correspondence
\cite{Kapranov-Vasserot}
gives a derived equivalence
\begin{equation} \label{eq:d-equiv}
 D^b \coh \Yv \cong D^b \coh \scYv,
\end{equation}
the Picard groups are {\em not} isomorphic:
$$
 \Pic \scYv \cong \Pic \bP(n, 1) \cong \bZ
  \not \cong \bZ^n \cong \Pic \Yv.
$$
Let $\scO_{\scYv}(i)$ be the line bundle on $\scYv$
obtained by restricting the pull-back of $\scO_{\bP(1,n)}(i)$.

Homological mirror symmetry
\cite{Kontsevich_HAMS, Kontsevich_ENS98}
for $\bP(1,n)$ gives an equivalence
\begin{align} \label{eq:wpl-hms}
 D^b \coh \bP(1,n) \cong D^b \Fuk W,
\end{align}
where $\Fuk W$ is a Fukaya category
associated with the Lefschetz fibration
\begin{align*}
\begin{array}{cccc}
 W : & \bCx & \to & \bC \\
  & \vin & & \vin \\
  & z & \mapsto & 1/z + z^n.
\end{array}
\end{align*}
This is a special case of
(a generalization to toric stacks of)
the work of Abouzaid
\cite{Abouzaid_HCRMSTV,Abouzaid_MHTGHMSTV}
on homological mirror symmetry
for toric varieties.
There are also works by Fang \cite{Fang_HMSTP}
and Fang, Liu, Treumann and Zaslow
\cite{Fang-Liu-Treumann-Zaslow_TD,
Fang-Liu-Treumann-Zaslow_CCC}
on homological mirror symmetry for toric varieties,
which are also motivated by the Strominger-Yau-Zaslow conjecture
but different from the work of Abouzaid.

Under the equivalence \eqref{eq:wpl-hms},
the collection
$( \scO_{\bP(1,n)}(i) )_{i=0}^n$
of line bundles
is mapped to Lefschetz thimbles
$(\Delta_i)_{i=0}^n$.
These Lefschetz thimbles can be lifted
to Lagrangian sections $(L_i)_{i=0}^n$
of the SYZ fibration for $Y'$.
Let $\scW'$ be the full subcategory
of the {\em wrapped Fukaya category} of $Y'$
consisting of $(L_i)_{i=0}^n$.

The second main result in this paper is the following:

\begin{theorem} \label{th:main2}
There is an equivalence
\begin{align} \label{eq:main2}
 D^b \scW' \cong D^b \coh \scYv
\end{align}
of triangulated categories sending $L_i$ to $\scO_{\scYv}(i)$ for $i = 0, \ldots, n$.
\end{theorem}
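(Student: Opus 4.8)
The plan is to compare three triangulated categories: $D^b \coh \scYv$, $D^b \coh \bP(1,n)$, and $D^b \scW'$, and to identify the images of the relevant line bundles/objects under the connecting equivalences. First I would invoke homological mirror symmetry for the weighted projective line \eqref{eq:wpl-hms}, which identifies $D^b \coh \bP(1,n)$ with $D^b \Fuk W$ and sends the full strong exceptional collection $(\scO_{\bP(1,n)}(i))_{i=0}^n$ to the Lefschetz thimbles $(\Delta_i)_{i=0}^n$ of the Lefschetz fibration $W$. Since $\scYv$ is obtained from (the canonical bundle over) $\bP(1,n)$ by removing an anti-canonical divisor, and $\scO_{\scYv}(i)$ is the restriction of the pull-back of $\scO_{\bP(1,n)}(i)$, the line bundles $(\scO_{\scYv}(i))_{i=0}^n$ still generate $D^b \coh \scYv$; one should check that they form a full exceptional collection there (the $\mathrm{Ext}$-algebra changes in a controlled way once the anti-canonical divisor is removed — morphisms that previously landed in the divisor now survive).

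The next step is to compute the endomorphism $A_\infty$-algebra $\bigoplus_{i,j} \mathrm{Hom}^\bullet(\scO_{\scYv}(i), \scO_{\scYv}(j))$ on the coherent side, and the corresponding morphism algebra $\bigoplus_{i,j} \HW^\bullet(L_i, L_j)$ in the wrapped Fukaya category $\scW'$ on the symplectic side, and to exhibit an $A_\infty$-isomorphism between them. The Lagrangians $L_i$ are lifts to the SYZ fibration of the thimbles $\Delta_i$, so their wrapped Floer cohomology can be related to the Floer cohomology of the $\Delta_i$ in $\Fuk W$ together with the extra wrapping allowed in the fiber direction; concretely, passing from the Lefschetz fibration to $Y'$ (removing the fiber over $\infty$, equivalently opening up the symplectic manifold) should correspond exactly to passing from $\bP(1,n)$ to $\scYv$ by removing the anti-canonical divisor. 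This is the structural heart of the argument, and I would set it up using the relationship between wrapped Floer theory on a Landau-Ginzburg model and directed Fukaya categories, e.g.\ a Viterbo-type restriction or the localization picture relating $\scW'$ to $\Fuk W$ by inverting the continuation element associated with the removed fiber. One then checks that the resulting algebras agree with the coherent computation, matching $L_i \leftrightarrow \scO_{\scYv}(i)$ on the nose, and that higher products match; since both collections are exceptional, formality or a degree/grading argument should pin down the $A_\infty$-structure up to the needed quasi-isomorphism. Finally, an argument that $(L_i)_{i=0}^n$ split-generate the relevant part of the wrapped category (they generate $\Fuk W$, hence their wrapped versions generate $\scW'$ by construction, as $\scW'$ is \emph{defined} as the full subcategory they span), combined with the standard fact that a full exceptional collection in $D^b \coh \scYv$ induces a derived equivalence with the category of modules over its endomorphism algebra, yields \eqref{eq:main2}.

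The main obstacle I expect is making precise the comparison between the wrapped Floer cohomology $\HW^\bullet(L_i,L_j)$ in $Y'$ and the coherent $\mathrm{Hom}$-spaces on the open Calabi-Yau $\scYv$: one must correctly account for the wrapping in the base direction of the conic fibration $\pi: Y' \to \bCx$ (which produces the ``negative'' powers and the infinitely many generators that get truncated by the grading once one restricts to the $n+1$ objects in question), and verify that these match the sections of $\scO_{\scYv}(j-i)$ over the open variety rather than over the compact $\bP(1,n)$. A secondary technical point is the comparison of $A_\infty$-structures (signs, orientations, and the choice of grading/Calabi-Yau structure), but given the exceptionality of both collections this should reduce to a finite check. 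Once the algebra isomorphism is in place, the passage to derived categories is formal.
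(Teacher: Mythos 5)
Your overall architecture coincides with the paper's: both reduce \eqref{eq:main2} to an isomorphism between the endomorphism algebra of $\bigoplus_i \scO_{\scYv}(i) = \bigoplus_i \pi^*\scO_{\bP(1,n)}(i)|_{\scYv}$ and the wrapped Floer algebra $\bigoplus_{i,j}\HW(\hat L_i,\hat L_j)$, followed by a Morita/tilting argument (generation of $\scW'$ being true by definition, as you note). One caveat before the main point: the $\scO_{\scYv}(i)$ do \emph{not} form an exceptional collection on $\scYv$ --- each has the full coordinate ring of the open Calabi--Yau surface as endomorphisms --- so the exceptionality/formality argument you invoke to pin down the $A_\infty$-structure is not available there. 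What the paper actually uses is that $\bigoplus_i\pi^*\scO(i)|_{\scYv}$ is a \emph{tilting object} (a generator with $\End^*$ concentrated in degree $0$, the latter following from cyclicity of $(\scO_{\bP(1,n)}(i))_i$ and the fact that acyclicity survives restriction to the complement of a principal divisor); the matching fact on the symplectic side is that the wrapped morphism spaces are concentrated in degree $0$, which is what kills the higher products.

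The genuine gap is that the step you describe as ``one then checks that the resulting algebras agree'' is the entire technical content of the paper (Theorem \ref{th:ring_isom} and Propositions \ref{pr:degree}--\ref{pr:ring_isom3}), and it does not follow formally from the equivalence \eqref{eq:wpl-hms} that you take as input. What is needed is the strictly stronger statement of Conjecture \ref{cj:ring_isom}: that the wrapping autoequivalence $\sigma_*$ is mirror to $-\otimes\omega_{\bP(1,n)}$ \emph{and} that the Floer continuation element $\frakt$ is mirror to multiplication by the anticanonical section $s$. The paper has to prove this by hand for $W=z+1/z^n$, by labelling the intersection points of wrapped thimbles with monomials $x^ay^b$. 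Your ``inverting the continuation element'' localization must then be implemented concretely: the paper parametrizes the generators of $\HF(\phi_w(\hat L_{i_0}),\hat L_{i_1})$ as $q_j$ with $q\in\sigma^w(\Delta_{i_0})\cap\Delta_{i_1}$ and $0\le j\le \ord(q)+w$, sends $q_j\mapsto (s-1)^jQ'$, and verifies multiplicativity using Pascaleff's binomial formula $\frakm_2(q_j,r_k)=\sum_{t=0}^{\ell}\binom{\ell}{t}p_{j+k+t}$ together with the identification $\ell=\ord(p)-\ord(q)-\ord(r)$ of the projected triangle's intersection number with $W^{-1}(0)$ (Proposition \ref{pr:degree}). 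Without these two inputs your outline cannot be completed; with them, it becomes essentially the paper's proof.
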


The proof is based on
an analysis of the behavior of the wrapped Fukaya category under suspension,
and depends heavily on the work of Pascaleff \cite{Pascaleff_FCMPP}.
We expect that $(L_i)_{i=0}^n$ generates
the wrapped Fukaya category,
so that the left hand side of \eqref{eq:main2} is
the whole wrapped Fukaya category.

Theorems \ref{th:main1} and \ref{th:main2}
are compatible in the following sense:
There exists a symplectomorphism $Y \simto Y'$ which induces an equivalence
$$
D^b \scW \simto D^b \scW',
$$
so that the Lagrangians $(L_i)_{i=0}^n$ in $Y'$ are images of
Lagrangians $(L_{\gamma_i})_{i=0}^n$ in $Y$ associated with
certain strongly admissible paths $\gamma_0,\gamma_1,\ldots,\gamma_n$
in $\bCx \setminus \Delta$. One can then choose a derived equivalence
$$
D^b \coh \scYv \simto D^b \coh \Yv
$$
in such a way that the images of $(L_{\gamma_i})_{i=0}^n$ under the composition
$$
D^b \scW \simto D^b \scW' \simto D^b \coh \scYv \simto D^b \coh \Yv
$$
of equivalences are precisely given by their SYZ transformations
described in Theorem \ref{th:main1}.
This shows that homological mirror symmetry for non-compact branes
is realized by SYZ
just as for compact branes \cite{Chan_HMSARTD}.

This paper is organized as follows:
In \pref{sec:SYZ},
we briefly recall the SYZ construction of the mirror manifold
$\Yv$
from \cite[Section 3]{Chan_HMSARTD}.
In \pref{sec:Lagr_submfd_over_paths},
we introduce the notion of a strongly admissible path to which we
associate a noncompact Lagrangian submanifold in $(Y,\omega)$,
which is a section of the SYZ fibration $\rho$.
In \pref{sec:SYZ_transform},
we describe holomorphic line bundles on $\Yv$
obtained as SYZ transformations
of noncompact Lagrangian submanifolds
associated with strongly admissible paths
and prove \pref{th:main1}.
The proof of \pref{th:main2} is given
in \pref{sc:hms}.

{\em Acknowledgment:}
K.~C. is supported by Hong Kong RGC Direct Grant for Research 2011/2012
(Project ID: CUHK2060434).
K.~U. is supported by JSPS Grant-in-Aid for Young Scientists No.24740043.
A part of this paper was worked out
while K.~U. was visiting the Chinese University of Hong Kong,
whose hospitality
is gratefully acknowledged.

\section{SYZ mirror symmetry}
 \label{sec:SYZ}


We start with the Hamiltonian $S^1$-action on $(Y,\omega)$:
\begin{equation*}
e^{2\pi\sqrt{-1}t}\cdot(u,v,z) = \left(e^{2\pi\sqrt{-1}t}u,e^{-2\pi\sqrt{-1}t}v,z\right),
\end{equation*}
whose moment map is given by
\begin{equation*}
\mu(u,v,z)=\frac{1}{2}\left(|u|^2-|v|^2\right).
\end{equation*}
Let $B:=\bR^2$. Then the map $\rho:Y\to B$ defined by
\begin{equation*}
\rho(u,v,z)=(\log|z|,\mu)=\left(\log|z|,\frac{1}{2}\left(|u|^2-|v|^2\right)\right)
\end{equation*}
is a Lagrangian torus fibration on $Y$, whose discriminant locus is given by the finite set
\begin{equation*}
\Gamma:=\{(s_0,0), (s_1,0), \ldots, (s_n,0)\}\subset B,
\end{equation*}
where we denote $s_i := \log a_i$ for $i=0,\ldots,n$.
This is usually called an \emph{SYZ fibration} of $(Y,\omega)$.

Let $B^\sm:=B\setminus\Gamma$ be the smooth locus.
Then the fiber $T_{s,\lambda}$ over a point $(s,\lambda)\in B^\sm$ is a smooth Lagrangian torus in $Y$, and each of the fiber $T_{s_i,0}$ over $(s_i,0)\in\Gamma$ is singular with one nodal singularity. Furthermore, the locus of Lagrangian torus fibers which bound nonconstant holomorphic disks is given by a union of vertical lines:
\begin{equation*}
H:=\{(s,\lambda)\in B \mid s=s_i\textrm{ for some $i=0,1,\ldots,n$}\}.
\end{equation*}
Each connected component of $H$ is called a \emph{wall} in $B$.

The Lagrangian torus fibration $\rho:Y\to B$ induces an integral affine structure on $B^\sm$, called the \textit{symplectic affine structure}. As we have mentioned, integrality of the affine structure means that the transition maps between charts on $B^\sm$ are elements of the integral affine linear group $$\textrm{Aff}(\bZ^2):=\textrm{GL}_2(\bZ)\rtimes\bZ^2.$$
This ensures that $\Lambda\subset TB^\sm$, the family of lattices locally generated by $\partial/\partial x_1,\partial/\partial x_2$ where $x_1,x_2$ are local affine coordinates on $B^\sm$, is well-defined.

The Strominger-Yau-Zaslow (SYZ) conjecture \cite{Strominger-Yau-Zaslow} suggested that a mirror of $(Y,\omega)$ can be constructed by fiberwise-dualizing an SYZ fibration on $(Y,\omega)$. More precisely, one defines the \emph{semi-flat mirror} of $(Y,\omega)$ as the moduli space $\check{Y}_0$ of pairs $(T_{s,\lambda},\nabla)$ where $\nabla$ is a flat $U(1)$-connection (up to gauge equivalence) on the trivial line bundle $\underline{\bC}:=\bC\times T_{s,\lambda}$ over a smooth Lagrangian torus fiber $T_{s,\lambda}$. Topologically, $\check{Y}_0$ is simply the quotient $TB^\sm/\Lambda$ of the tangent bundle $TB^\sm$ by $\Lambda$. This is naturally a complex manifold where the local complex coordinates are given by exponentiations of complexifications of local affine coordinates on $B^\sm$ (while the quotient $T^*B^\sm/\Lambda^\vee$ of the cotangent bundle $T^*B^\sm$ by the dual lattice $\Lambda^\vee$, which is canonically a symplectic manifold, is contained in $Y$ as an open dense subset such that the restriction of $\omega$ to it gives the canonical symplectic structure). However, this is not quite the correct mirror manifold because the natural complex structure on $\check{Y}_0$ is not globally defined due to nontrivial monodromy of the affine structure around each singular point $(s_i,0)\in\Gamma$.

A more concrete description of this phenomenon is as follows. For $i=0,1,\ldots,n$, consider the strip $B_i:=(s_{i-1},s_{i+1})\times\bR$ (where we set $s_{-1}:=-\infty$ and $s_{n+1}:=+\infty$). A covering of $B^\sm$ is given by the open sets
\begin{equation*}
U_i:=B_i\setminus[s_i,s_{i+1})\times\{0\},\quad V_i:=B_i\setminus(s_{i-1},s_{i}]\times\{0\}.
\end{equation*}
$i=0,1,\ldots,n$. The intersection $U_i\cap V_i$ consists of two connected components:
$$U_i\cap V_i=B_i^+\cup B_i^-,$$
where $B_i^+$ (resp. $B_i^-$) corresponds to the component in which $\lambda>0$ (resp. $\lambda<0$).

\begin{figure}[ht]
\setlength{\unitlength}{1mm}
\centering
\begin{picture}(100,75)
\put(10,2){\vector(0,1){69}} \put(9,72){$\lambda$} \put(-5,35){\vector(1,0){110}} \put(106,34){$s$} \put(8,32){$0$}
\curve(25,70, 25,62) \curve(25,61, 25,53) \curve(25,52, 25,44) \curve(25,43, 25,35) \curve(25,34, 25,26) \curve(25,25, 25,17) \curve(25,16, 25,8) \curve(25,7, 25,4) \put(23.4,34){$\times$} \put(18.5,32.5){$s_{i-1}$}
\curve(45,70, 45,62) \curve(45,61, 45,53) \curve(45,52, 45,44) \curve(45,43, 45,35) \curve(45,34, 45,26) \curve(45,25, 45,17) \curve(45,16, 45,8) \curve(45,7, 45,4) \put(43.4,34){$\times$} \put(42,32.5){$s_i$}
\curve(60,70, 60,62) \curve(60,61, 60,53) \curve(60,52, 60,44) \curve(60,43, 60,35) \curve(60,34, 60,26) \curve(60,25, 60,17) \curve(60,16, 60,8) \curve(60,7, 60,4) \put(58.4,34){$\times$} \put(60.5,32.5){$s_{i+1}$}
\curve(80,70, 80,62) \curve(80,61, 80,53) \curve(80,52, 80,44) \curve(80,43, 80,35) \curve(80,34, 80,26) \curve(80,25, 80,17) \curve(80,16, 80,8) \curve(80,7, 80,4) \put(78.4,34){$\times$} \put(80.5,32.5){$s_{i+2}$}
\put(43,57){\vector(0,1){5}} \put(41.5,63){$w$} \put(43,57){\vector(-1,0){5}} \put(34,56.5){$u_i$}
\put(47,57){\vector(0,1){5}} \put(45.5,63){$w$} \put(47,57){\vector(1,0){5}} \put(52.5,56.5){$v_{i+1}$}
\end{picture}
\caption{The base affine manifold $B$.}\label{base_B}
\end{figure}

On $U_i$ (resp. $V_i$), denote by $u_i$ (resp. $v_{i+1}$) the exponentiation of the complexification of the left-pointing (resp. right-pointing) affine coordinate. (Our convention is that for a real number $x\in\bR$, its complexification is given by $-x+\sqrt{-1}y$.) Also, denote by $w$ the exponentiation of the complexification of the upward-pointing affine coordinate. See Figure \ref{base_B}.

Now $w$ is a globally defined coordinate. Geometrically, it can be written as
\begin{equation*}
w(T_{s,\lambda},\nabla)=\left\{\begin{array}{ll}
                        \exp\left(-\int_\alpha\omega\right)\textrm{hol}_\nabla(\partial\alpha) & \textrm{ for $\lambda\geq0$},\\
                        \exp\left(\int_\alpha\omega\right)\textrm{hol}_\nabla(-\partial\alpha) & \textrm{ for $\lambda\leq0$},
                        \end{array}\right.
\end{equation*}
where $\alpha\in\pi_2(Y,T_{s_i,\lambda})$ is the class of the holomorphic disk in $Y$ bounded by $T_{s_i,\lambda}$ for some $i=0,1,\ldots,n$, and $\textrm{hol}_\nabla(\partial\alpha)$ is the holonomy of the flat $U(1)$-connection $\nabla$ around the boundary $\partial\alpha\in\pi_1(T_{s_i,\lambda})$. The class $\alpha$ changes to $-\alpha$ when one moves from $\lambda>0$ to $\lambda<0$.

But the other coordinates $u_i$ and $v_{i+1}$ are not globally defined:
Since the monodromy of the affine structure going counter-clockwise
around $(s_i,0)\in\Gamma$ is given by the matrix
\begin{equation*}
\left(\begin{array}{ll}
        1 & 1\\
        0 & 1
      \end{array}\right),
\end{equation*}
the coordinates $(u_i,w)$ and $(v_{i+1},w)$ glue on $U_i\cap V_i$ according to
\begin{eqnarray*}
u_i=v_{i+1}^{-1} & \textrm{ on $B_i^+$,}\\
u_i=v_{i+1}^{-1}w & \textrm{ on $B_i^-$.}
\end{eqnarray*}
Hence, the monodromy of the complex coordinates
going counter-clockwise around the point $(s_i,0)$ is nontrivial
and given by
\begin{equation*}
u_i\mapsto u_iw,\quad v_{i+1}^{-1}\mapsto v_{i+1}^{-1}w.
\end{equation*}
In particular, these complex coordinates on $TB_i/\Lambda$
do not form a globally defined complex structure on $\check{Y}_0$.

The examples in \cite{Auroux_MSTD, Auroux_SLFWCMS, CLL11, Abouzaid-Auroux-Katzarkov_LFB}
suggest the following construction
(we should mention that these are all special cases
of the constructions in Kontsevich-Soibelman \cite{Kontsevich-Soibelman_NAANCG}
and Gross-Siebert \cite{Gross-Siebert_RAGCG},
but those general constructions largely ignore the symplectic aspects
and hence Lagrangian torus fibration structures -
they build the mirror family directly from tropical data in the base $B$):
in order to get the genuine mirror manifold, we need to modify
the gluing of complex charts on $\check{Y}_0$ by quantum corrections
from disk instantons
\begin{eqnarray}\label{wc+}
u_i=v_{i+1}^{-1}+v_{i+1}^{-1}w =& v_{i+1}^{-1}(1+w) & \textrm{ on $B_i^+$,}\\
u_i=v_{i+1}^{-1}w+v_{i+1}^{-1} =& v_{i+1}^{-1}w(1+w^{-1}) & \textrm{ on $B_i^-$.}\label{wc-}
\end{eqnarray}
Geometrically, the term $v_{i+1}^{-1}w$ in the first formula \eqref{wc+} should be viewed as multiplying $v_{i+1}^{-1}$ by $w$ where $w$ corresponds to the unique nonconstant holomorphic disk bounded by the Lagrangian torus $T_{0,\lambda}$ whose area is given by
$$\lambda=-\log|w|>0.$$
This means that we are correcting the term $v_{i+1}^{-1}$ by adding the contribution from the holomorphic disk bounded by $T_{0,\lambda}$ when we cross the upper half of the wall $\{s_i\}\times\bR\subset H$. In the same way, the term $v_{i+1}^{-1}$ in the second formula \eqref{wc-} is given by multiplying $v_{i+1}^{-1}w$ by $w^{-1}$ where $w^{-1}$ now corresponds to the unique nonconstant holomorphic disk bounded by the Lagrangian torus $T_{0,\lambda}$ whose area is equal to
$$-\lambda=\log|w|=-\log|w^{-1}|>0.$$
So we are correcting $v_{i+1}^{-1}w$ by the disk bounded by $T_{0,\lambda}$ when we cross the lower half of the wall $\{s_i\}\times\bR\subset H$.

The formulas \eqref{wc+}, \eqref{wc-} can actually be interpreted as wall-crossing formulas for the counting of Maslov index two holomorphic disks in $Y$ bounded by Lagrangian torus fibers. To see this, we need to partially compactify $Y$ by allowing $z$ to take values in $\bC$ and replacing $\rho$ by the map
$$(u,v,z) \mapsto \left(|z|,\mu\right) = \left(|z|,\frac{1}{2}\left(|u|^2-|v|^2\right)\right).$$
Then the base becomes an affine manifold with both singularities and boundary: it is a right half-space $\bar{B}$ in $\bR^2$, and the boundary $\partial\bar{B}$ corresponds to the divisor in $Y$ defined by $z=0$. In this situation, each of the local coordinates $u_i$, $v_{i+1}^{-1}$ and $v_{i+1}^{-1}w$ can be expressed as in the form
\begin{equation*}
\exp\left(-\int_\beta\omega\right)\textrm{hol}_\nabla(\partial\beta),
\end{equation*}
for a suitable relative homotopy class $\beta\in\pi_2(Y,T_{s,\lambda})$ of Maslov index two
(cf. Auroux \cite{Auroux_MSTD, Auroux_SLFWCMS}).

In any case, the modification of gluing cancels the monodromy and defines global complex coordinates $u_i,v_i,w$ on $TB_i/\Lambda$ (topologically, $TB_i/\Lambda\cong(\bC^\times)^2$) related by
\begin{equation}\label{gluing}
u_i=v_i^{-1},\quad u_{i+1}=v_{i+1}^{-1}\textrm{ and }u_iv_{i+1}=1+w.
\end{equation}
The instanton-corrected mirror $\check{Y}$ is then obtained by gluing together the pieces $TB_i/\Lambda$ ($i=0,1,\ldots,n$) according to \eqref{gluing}. On the intersection $(TB_{i-1}/\Lambda)\cap(TB_i/\Lambda)$, we have
\begin{equation*}
u_i=v_i^{-1},\quad u_iv_{i+1}=1+w=u_{i-1}v_i.
\end{equation*}
We remark that, by our definition, the mirror manifold $\check{Y}$ will have ``gaps" because for instance $u,v,w$ are $\bC^\times$-valued. A natural way to ``complete" the mirror manifold and fill out its gaps is by rescaling the symplectic structure of $Y$ and performing analytic continuation; see \cite[Section 4.2]{Auroux_MSTD} for a discussion of this ``renormalization" process.

Finally, one observes that this is precisely the gluing of complex charts in the toric resolution $X\to\bC^2/\bZ_{n+1}$ of the $A_n$-singularity. We thus conclude that the mirror is precisely given by the complex manifold
\begin{equation*}
\check{Y}:=X\setminus D,
\end{equation*}
where $X=X_\Sigma$ is the toric surface defined by the 2-dimensional fan
$\Sigma\in\bR^2$ generated by
$$
 \{ v_i:=(i-1,1)\in N \mid i=0,1,\ldots,n+1 \},
$$
and $D$ is the hypersurface in $X_\Sigma$
defined by $h:=\chi^{(0,1)}=1$.
Here $h:X\to\bC$ is the holomorphic function
whose zero locus is the union of all the toric divisors in $X$.
For $i=1,\ldots,n$,
if we let $S_i$ denote the interval $(s_{i-1},s_i)\times\{0\}\subset B$,
then (the closure of) the quotient $TS_i/TS_i\cap\Lambda$ in $\check{Y}$
is one of the $n$ exceptional divisors in $X_\Sigma$,
each of which is a $(-2)$-curve.
We denote this exceptional divisor by $E_i\subset\check{Y}$.

Let $M = \Hom(N, \bZ)$ be the lattice of characters
of the dense torus of $X_\Sigma$ and
$
 \Sigma(1) = \{ v_0, \ldots, v_{n+1} \}
$
be the set of one-dimensional cones of the fan $\Sigma$.
If we write the toric divisor on $X_\Sigma$
corresponding to $v_i \in \Sigma(1)$ as $D_i$,
then we have $D_i = E_i$ for $i = 1, \ldots, n$, and
$D_0$ and $D_{n+1}$ are non-compact divisors.
One can easily show
from the divisor sequence
$$
 0 \to M \to \bZ^{\Sigma(1)} \to \Pic X_\Sigma \to 0
$$
that $\Pic X_\Sigma$ is generated by $\scO(D_i)$
for $i = 0, \ldots, n-1$,
and the map
\begin{align*}
\begin{array}{cccc}
 \deg : & \Pic X_\Sigma & \to & \bZ^n \\
  & \vin & & \vin \\
  & \scL & \mapsto & (\deg \scL|_{D_i})_{i=1}^n
\end{array}
\end{align*}
is an isomorphism
of abelian groups.
Since $\Yv = X_\Sigma \setminus D$ is the complement
of a principal divisor,
the restriction map $\Pic X_\Sigma \to \Pic \Yv$
and hence the map \eqref{eq:PicYv} is an isomorphism
of abelian groups.

\section{Lagrangian submanifolds fibred over paths}
 \label{sec:Lagr_submfd_over_paths}


Consider the projection map
\begin{equation*}
\pi:Y\to\bC^\times,\quad (u,v,z)\mapsto z.
\end{equation*}
Each fiber is a conic $\{(u,v)\in\bC^2 \mid uv=z^{-1}f(z)\}$ which
degenerates to the cone $uv=0$ over a zero of the polynomial $f$.
Recall that we denote by
$$
\Delta = \{a_0,a_1,\ldots,a_n\}
$$
the set of zeros of the polynomial $f(z)$,
which we have assumed to be real and positive,
and in the ordering
\begin{equation*}
0<a_0<a_1<\ldots<a_n.
\end{equation*}

The symplectic fibration $\pi:Y\to\bC^\times$ induces a connection
on the tangent bundle of $Y$ where the horizontal subspace at $y\in Y_0$
is given by the symplectic orthogonal complement to the vertical subspace:
$$H_y:=\textrm{Ker}(d\pi_y)^{\perp_\omega}.$$

Given a smooth embedded path $\gamma:I\to\bC^\times\setminus\Delta$ where $I\subset\bR$ is an interval, parallel transport with respect to the above connection along $\gamma$ yields symplectomorphisms between smooth fibers
\begin{equation*}
\tau_{t_1}^{t_2}:\pi^{-1}(\gamma(t_1))\to\pi^{-1}(\gamma(t_2)),
\end{equation*}
for $t_1<t_2 \in I$. These symplectomorphisms are $S^1$-equivariant since the $S^1$-action is Hamiltonian. The vanishing cycle $V_t$ in a smooth fiber $\pi^{-1}(\gamma(t))$ is given by its equator $\mu=0$ (i.e. $|u|=|v|$). Vanishing cycles are invariant under parallel transport, so that the \textit{Lefschetz thimble} $L_\gamma$ along $\gamma$, given by the union of all vanishing cycles $V_t\subset\pi^{-1}(\gamma(t))$ for $t\in I$, is a Lagrangian submanifold in $(Y,\omega)$. If $\gamma$ is a matching path, i.e. a path connecting two critical values of $\pi$ in $\bC^\times$, then $L_\gamma$ is a Lagrangian sphere in $(Y,\omega)$.

Other Lagrangian submanifolds can be constructed in a similar way,
as in the work of Pascaleff \cite{Pascaleff_FCMPP}.
\begin{defn}\label{admissible}
Let $\gamma:\bR\to\bC^\times\setminus\Delta$ be a smooth embedded path.
We call the path $\gamma$ {\em admissible} if the following conditions are satisfied:
\begin{enumerate}
\item[(1)]
(boundary conditions) $\lim_{t \to -\infty} |\gamma(t)| = 0$ and $\lim_{t \to \infty} |\gamma(t)| = \infty$.
\item[(2)]
$\gamma$ intersects transversally with each of the line segments $\epsilon_i := [a_{i-1}, a_i]$ for $i = 1,\ldots,n$.
\end{enumerate}
\end{defn}

Given an admissible path $\gamma:\bR\to\bC^\times$, we fix $t_0 \in \bR$
and choose a Lagrangian cycle $C_0$ in the conic fiber $\pi^{-1}(\gamma(t_0))$.
Then the submanifold $L_{\gamma,C_0}$ contained inside $\pi^{-1}(\gamma)$ swept out
by the parallel transport of $C_0$ along $\gamma$ is a Lagrangian submanifold in $(Y,\omega)$
(cf. \cite[Section 5.1]{Auroux_MSTD} and \cite[Section 3.3]{Pascaleff_FCMPP}).

\begin{defn} \label{df:winding_number}
Let $\gamma:\bR\to\bC^\times\setminus\Delta$ be an admissible path.
For $i=1,\ldots,n$, we define the {\em $i$-th winding number} $w_i(\gamma)$ of $\gamma$
to be the topological intersection number between $\gamma(\bR)$
and the line segment $\epsilon_i = [a_{i-1}, a_i] \subset \bCx$.
\end{defn}

Notice that the winding numbers of $\gamma$ are invariant
when we deform $\gamma$ in a fixed homotopy class
relative to the boundary conditions
$\lim_{t \to -\infty} |\gamma(t)| = 0$ and
$\lim_{t \to \infty} |\gamma(t)| = \infty$.
In particular, we can deform $\gamma$ so that
$\gamma(t)$ lies on the negative real axis for $t < -T$
for some fixed positive $T$. Then we consider
the Lagrangian (real locus)
\begin{equation*}
C_t:=\{(u,v,z)\in\pi^{-1}(\gamma(t)) \mid u,v \in \bR\}.
\end{equation*}
in the conic fiber $\pi^{-1}(\gamma(t))$ for each $t<-T$,
which is `dual' to the vanishing cycle $V_t$.
Notice that $C_t$ is invariant under symplectic parallel transport
for all $t<-T$, meaning that
\begin{equation*}
\tau_{t_1}^{t_2}(C_{t_1})=C_{t_2}
\end{equation*}
for all $t_1<t_2<-T$. Let $L_\gamma$ be the submanifold in $Y$
swept out by parallel transport of $C_t$ ($t<-T$):
\begin{equation*}
L_\gamma:=\bigcup_{t\in(-\infty,-T)}C_t\cup\bigcup_{t\in[-T,\infty)}\tau_{-T}^t(C_{-T}).
\end{equation*}
This defines a Lagrangian submanifold in $(Y,\omega)$ which is homeomorphic to $\bR^2$.
Note that the curve $C_{-T}:=\{(u,v,z) \in \pi^{-1}(\gamma(-T)) \mid u,v \in \bR\}$
in the conic fiber $\pi^{-1}(\gamma(-T))$ is being twisted by Dehn twists
in vanishing cycles (i.e. the equator $\lambda=0$ in a fiber of $\pi:Y\to\bC^\times$)
as one goes along $\gamma$.

Furthermore, given an admissible path $\gamma$, we can deform it
(again with respect to the boundary conditions
$\lim_{t \to -\infty} |\gamma(t)| = 0$ and
$\lim_{t \to \infty} |\gamma(t)| = \infty$) so that
the following condition is satisfied:

\begin{defn}\label{strongly_admissible}
An admissible path $\gamma:\bR\to\bC^\times\setminus\Delta$ is {\em strongly admissible}
if the modulus $|\gamma| : \bR \to \bR^{> 0}$ is strictly increasing.
\end{defn}

\begin{prop}\label{prop3.1}
Let $\gamma:\bR\to\bC^\times$ be a strongly admissible path.
Then the Lagrangian submanifold $L_\gamma$ is
a section of the SYZ fibration $\rho:Y\to B$.
\end{prop}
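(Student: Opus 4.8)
The plan is to show that $L_\gamma$ meets each torus fiber $T_{s,\lambda}$ of $\rho$ in exactly one point. Recall that $L_\gamma \subset \pi^{-1}(\gamma(\bR))$ and $\rho = (\log|z|, \mu)$. First I would use strong admissibility to reduce the first coordinate: since $|\gamma| : \bR \to \bR^{>0}$ is strictly increasing and has limits $0$ and $\infty$, the map $t \mapsto \log|\gamma(t)|$ is a diffeomorphism $\bR \simto \bR$. Hence for each $s \in \bR$ there is a unique $t = t(s)$ with $\log|\gamma(t)| = s$, so $L_\gamma \cap \rho^{-1}(\{s\} \times \bR)$ is contained in the single conic fiber $\pi^{-1}(\gamma(t(s)))$. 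It therefore suffices to show that in each conic fiber $\pi^{-1}(z)$ (with $z = \gamma(t)$, $t < -T$ or $t \geq -T$), the relevant Lagrangian cycle — either $C_t = \{u,v \in \bR\}$ for $t < -T$, or its parallel transport $\tau_{-T}^t(C_{-T})$ for $t \geq -T$ — meets each level set $\{\mu = \lambda\}$ of the moment map in exactly one point.

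For $t < -T$ this is a direct computation in the conic $uv = z^{-1}f(z)$: here $z = \gamma(t)$ lies on the negative real axis, so $z^{-1}f(z)$ is a nonzero real number, say $c$. The real locus $C_t = \{(u,v) \in \bR^2 \mid uv = c\}$ is a hyperbola, and on it $\mu = \tfrac12(u^2 - v^2)$ restricts to a strictly monotonic proper function of the branch parameter (on each branch $u = c/v$, one has $\mu = \tfrac12(c^2/v^2 - v^2)$, monotone in $v$), and one checks the two branches sweep out $\mu \in \bR$ bijectively; so $C_t \to \bR$, $(u,v) \mapsto \mu$, is a bijection. Since $C_t$ is $S^1$-equivariantly identified, under parallel transport, with the fibers of $L_\gamma$, and since parallel transport is $S^1$-equivariant and symplectic, hence commutes with $\mu$, the same bijectivity statement passes to $\tau_{-T}^t(C_{-T})$ for all $t \geq -T$: the map $\mu|_{\tau_{-T}^t(C_{-T})}$ is conjugate to $\mu|_{C_{-T}}$ by the $S^1$-equivariant parallel transport diffeomorphism, hence still a bijection onto $\bR$. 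Combining this with the first paragraph, $L_\gamma$ meets every fiber $T_{s,\lambda}$ in exactly one point, i.e. $L_\gamma$ is a section; that $L_\gamma$ is smooth and Lagrangian was already established in the construction preceding the statement.

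I expect the main obstacle to be the monotonicity/bijectivity of $\mu$ on the real conic $C_t$ uniformly as $z = \gamma(t)$ varies — in particular handling the sign of $c = z^{-1}f(z)$ and checking that the two branches of the hyperbola together (and the degenerate line $uv = 0$ in a singular fiber, which does not occur here since $\gamma$ avoids $\Delta$) give a clean bijection onto the $\mu$-line, and that this is stable under the $S^1$-equivariant parallel transport. The key point making this work is the $S^1$-equivariance: it forces parallel transport to intertwine the moment maps on different fibers, so the only genuine computation is the single model computation on one real conic, and everything else is transported along $\gamma$.
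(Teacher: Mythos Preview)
Your approach is essentially identical to the paper's: use strict monotonicity of $|\gamma|$ to single out a unique fiber of $\pi$ over each value of $s$, verify that $\mu$ is a bijection from the real curve $C_t$ in that fiber onto $\bR$, and then transport this bijectivity to every other fiber curve using the fact that the parallel transport maps $\tau_{-T}^t$ are $S^1$-equivariant and hence preserve $\mu$.

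There is one small but genuine glitch in your model computation. Your claim that ``the two branches sweep out $\mu \in \bR$ bijectively'' is false: your own formula $\mu = \tfrac12(c^2/v^2 - v^2)$ shows that \emph{each} branch of the real hyperbola $\{(u,v)\in\bR^2 : uv = c\}$ maps bijectively onto all of $\bR$, so the full real locus is $2$-to-$1$ onto the $\mu$-line. Equivalently, every $S^1$-orbit in the smooth conic fiber meets the real locus in the pair of antipodal points $(u,v)$ and $(-u,-v) = e^{i\pi}\cdot(u,v)$. The paper's definition of $C_t$ carries the same ambiguity, but the explicit section formula for $L_0$ written out immediately after the proposition (with $u \ge 0$) makes clear that a single component is intended. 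Once you replace $C_t$ by one of its two components, your argument and the paper's coincide.
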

\begin{proof}
By definition, $L_\gamma$ is invariant under parallel transport,
so it is a Lagrangian submanifold in $(Y,\omega)$. Moreover,
it is clear that the moment map $\mu$ when restricted to $C_t$ for $t<-T$
is a one-to-one map. Since the symplectomorphisms $\tau_{-T}^t$
are all $S^1$-equivariant, the value of $\mu$ is preserved under parallel transport.
Thus the restriction of $\mu$ to $\tau_{-T}^t(C_{-T})$ remains a one-to-one map.
Together with the condition that $|\gamma(t)|$ is strictly increasing,
this implies that $L_\gamma$ intersects each fiber $T_{s,\lambda}$
of the SYZ fibration $\rho:Y\to B$ at one point.
Hence $L_\gamma$ is a section of $\rho:Y\to B$.
\end{proof}

By deforming $\gamma$ in a fixed homotopy class
if necessary,
one can furthermore assume that a strongly admissible path
$\gamma : \bR \to \bCx \setminus \Delta$
satisfies $|\gamma(s)| = e^s$ for any $s \in \bR$ and
$\gamma(s_i) = - a_i$ for $i = 0, \ldots, n$ (recall
that $s_i = \log a_i$ for each $i$).
Then there exists a unique lift
$
 \gammatilde : \bR \to \bR
$
of
$
 \gamma / |\gamma| : \bR \to S^1
$
such that $\gammatilde(s_0)=0$,
and the winding numbers of $\gamma$ can be computed as
\begin{equation*}
 w_i(\gamma) = \frac{1}{2\pi}\left(\tilde{\gamma}(s_i)-\tilde{\gamma}(s_{i-1})\right)
\end{equation*}
for $i = 1, \ldots, n$.

As an example, consider the path
$$
 \gamma_0:\bR\to\bC^\times,\quad t\mapsto-e^t,
$$
which is clearly strongly admissible.
The corresponding Lagrangian submanifold
$L_0:=L_{\gamma_0}$ is simply the Cartesian product
of the real locus $C_{s_0}$ inside the conic fiber $C_{s_0}=\pi^{-1}(-a_0)$
with the negative real axis (which is the image $\gamma_0(\bR)$ of the path).
As a section of the SYZ fibration $\rho:Y\to B$,
the Lagrangian $L_0$ can be explicitly written as
\begin{eqnarray*}
\sigma:B\to Y,\quad (s,\lambda)\mapsto(u(s,\lambda),v(s,\lambda),z(s,\lambda)),
\end{eqnarray*}
where
\begin{eqnarray*}
u(s,\lambda) & = & \sqrt{\sqrt{\lambda^2+|f(-e^s)|^2}+\lambda},\\
v(s,\lambda) & = & (-1)^{n-1}\sqrt{\sqrt{\lambda^2+|f(-e^s)|^2}-\lambda},\\
z(s,\lambda) & = & -e^s.
\end{eqnarray*}

\section{SYZ transformations}
 \label{sec:SYZ_transform}


Consider $\Lambda^\vee\subset T^*B^\sm$, the family of lattices locally generated by $dx_1,dx_2$. As before, $x_1,x_2$ here denote local affine coordinates on $B^\sm$. As in \cite{Chan_HMSARTD}, we will assume that $x_2=-\lambda$ is the globally defined coordinate. Let
\begin{equation*}
\omega_0:=dx_1\wedge d\xi_1+dx_2\wedge d\xi_2
\end{equation*}
be the canonical symplectic structure on the quotient $T^*B^\sm/\Lambda^\vee$ of the cotangent bundle $T^*B^\sm$ by $\Lambda^\vee$, where $(\xi_1,\xi_2)$ are fiber coordinates on $T^*B^\sm$ so that $(x_1,x_2,\xi_1,\xi_2)\in T^*B^\sm$ denotes the cotangent vector $\xi_1dx_1+\xi_2dx_2$ at the point $(x_1,x_2)\in B^\sm$. In the previous section, we have constructed a global Lagrangian section $L_0$ of the SYZ fibration $\rho:Y\to B$. Then a theorem of Duistermaat \cite{Duistermaat_GAAC} says that there exists a fiber-preserving symplectomorphism
\begin{equation*}
\Theta:(T^*B^\sm/\Lambda^\vee,\omega_0)\overset{\cong}{\longrightarrow}(\rho^{-1}(B^\sm),\omega)
\end{equation*}
so that $L_0$ corresponds to the zero section of $T^*B^\sm/\Lambda^\vee$.

This symplectomorphism can be constructed as follows. Let $b\in B^\sm$. Then for every $\alpha\in T^*_bB^\sm$, we can associate a vector field $v_\alpha$ on the fiber $\rho^{-1}(b)$ by
\begin{equation*}
\iota_{v_\alpha}\omega=\rho^*\alpha.
\end{equation*}
Let $\phi^\tau_\alpha$ be the flow of $v_\alpha$ at time $\tau\in\bR$. Then we define an action $\theta_\alpha$ of $\alpha$ on $\rho^{-1}(b)$ by the time-1 flow
\begin{equation*}
\theta_\alpha(y)=\phi^1_\alpha(y),\quad y\in\rho^{-1}(b).
\end{equation*}
Now the covering map
\begin{equation*}
T^*B^\sm\to\rho^{-1}(B^\sm),\quad \alpha\mapsto\theta_\alpha(\sigma(\textrm{pr}(\alpha))),
\end{equation*}
where $\textrm{pr}:T^*B^\sm\to B^\sm$ denotes the projection map, induces a symplectomorphism $\Theta:T^*B^\sm/\Lambda^\vee\to\rho^{-1}(B^\sm)$.

Now let $y_1,y_2$ be the fiber coordinates on $TB^\sm$ dual to $\xi_1,\xi_2$, i.e. $(x_1,x_2,y_1,y_2)\in TB^\sm$ denotes the tangent vector $y_1\partial/\partial x_1+y_2\partial/\partial x_2$ at the point $(x_1,x_2)\in B^\sm$. Given a strongly admissible path $\gamma:\bR\to\bC^\times$, the noncompact Lagrangian submanifold $L_\gamma$ is a section of the SYZ fibration $\rho:Y\to B$ by Proposition \ref{prop3.1} (and as we mentioned before, every admissible path can be deformed to a strongly admissible one). Via the symplectomorphism $\Theta$, we can write $L_\gamma$ in the form
\begin{equation*}
L_\gamma = \left\{(x_1,x_2,\xi_1,\xi_2)\in T^*B^\sm/\Lambda^\vee \mid (x_1,x_2) \in B^\sm,\ \xi_j = \xi_j(x_1,x_2)\textrm{ for $j=1,2$}\right\},
\end{equation*}
where $\xi_j=\xi_j(x_1,x_2)$, $j=1,2$, are smooth functions on $B^\sm$. Since $L_\gamma$ is Lagrangian, the functions $\xi_1,\xi_2$ satisfy
\begin{equation*}
\frac{\partial\xi_j}{\partial x_l}=\frac{\partial\xi_l}{\partial x_j}
\end{equation*}
for $j,l=1,2$ (see \cite{Leung-Yau-Zaslow_SLHYM,Arinkin-Polishchuk_FCFT,Chan_HMSARTD}).

Lying at the basis of the SYZ proposal \cite{Strominger-Yau-Zaslow}
is the fact that the dual $T^*$ of a torus $T$ can be viewed as
the moduli space of flat $U(1)$-connections on the trivial line bundle
$\underline{\bC}:=\bC\times T$ over $T$. So a Lagrangian section
$L=\left\{(x,\xi(x))\in T^*U/T^*U\cap\Lambda^\vee \mid x\in U \right\}$ over an open set $U\subset B^\sm$ corresponds to a family of connections $\{\nabla_{\xi(x)} \mid x\in U\}$ patching together to give a $U(1)$-connection which can locally be written as
\begin{equation*}
\check{\nabla}_U=d+2\pi\sqrt{-1}\left(\xi_1 dy_1 + \xi_2 dy_2\right)
\end{equation*}
over $TU/TU\cap\Lambda\subset\check{Y}$. As shown in \cite{Leung-Yau-Zaslow_SLHYM, Arinkin-Polishchuk_FCFT} (see also \cite[Section 2]{Chan_HMSARTD}), the $(0,2)$-part $F^{(0,2)}$ (and also $(2,0)$-part $F^{(2,0)}$) of the curvature two form of $\check{\nabla}_U$ is trivial.

Recall that a covering of $B^\sm$ is given by the open sets
\begin{equation*}
U_i:=B_i\setminus[s_i,s_{i+1})\times\{0\},\quad V_i:=B_i\setminus(s_{i-1},s_{i}]\times\{0\}.
\end{equation*}
for $i=0,1,\ldots,n$. Now, the restriction of the Lagrangian section $L_\gamma$ to each $U_i$ (resp. $V_i$) is transformed to a $U(1)$-connection $\check{\nabla}_{U_i}$ over $TU_i/TU_i\cap\Lambda$ (resp. $\check{\nabla}_{V_i}$ over $TV_i/TV_i\cap\Lambda$). These connections can be glued together according to the gluing formulas \eqref{wc+}, \eqref{wc-}. Since the $(0,2)$-part $F^{(0,2)}$ of the curvature two form vanishes, this defines a holomorphic line bundle $\mathcal{L}_\gamma$ over $\check{Y}$.
\begin{defn}
Let $\gamma:\bR\to\bC^\times$ be a strongly admissible path and $L_\gamma$ be the noncompact Lagrangian submanifold in $(Y,\omega)$ associated to $\gamma$. We define the SYZ transformation of $L_\gamma$ to be the holomorphic line bundle $\mathcal{L}_\gamma$ over $\check{Y}$, i.e.
\begin{equation*}
\mathcal{F}(L_\gamma):=\mathcal{L}_\gamma.
\end{equation*}
\end{defn}
Notice that the isomorphism class of $\mathcal{L}_\gamma$ is unchanged when we deform $L_\gamma$ in a fixed Hamiltonian isotopy class (or deforming $\gamma$ is a fixed homotopy class relative to the boundary conditions $\lim_{t \to -\infty} |\gamma(t)| = 0$ and
$\lim_{t \to \infty} |\gamma(t)| = \infty$). Henceforth, we will regard this as defining the SYZ transformation of the Hamiltonian isotopy class of the Lagrangian submanifold $L_\gamma\subset Y$ as an isomorphism class of holomorphic line bundle over $\check{Y}$.

As an immediate example, the SYZ transformation of the zero section $L_0=L_{\gamma_0}$ gives the structure sheaf $\mathcal{O}_{\check{Y}}$ over $\check{Y}$.

The main goal of this section is to compute (the isomorphism class of) the line bundle $\mathcal{L}_\gamma$ in terms of the winding numbers of the path $\gamma$. To begin with, note that the isomorphism class of a line bundle over $\check{Y}$ is determined by the degrees of its restrictions to the exceptional divisors $E_i$ for $i=1,\ldots,n$. Given integers $d_1,\ldots,d_n\in\bZ$, let us denote by $\mathcal{L}_{d_1,\ldots,d_n}$ the line bundle on $\check{Y}$ such that $\deg\mathcal{L}_{d_1,\ldots,d_n}|_{E_i}=d_i$.

Now, given a Lagrangian section
\begin{equation*}
L_\gamma=\{(x_1,x_2,\xi_1,\xi_2)\in T^*B^\sm/\Lambda^\vee \mid (x_1,x_2)\in B^\sm,\ \xi_j=\xi_j(x_1,x_2)\textrm{ for $j=1,2$}\},
\end{equation*}
of the SYZ fibration $\rho:Y\to B$, its SYZ transformation is the connection $\check{\nabla}$ which can locally be expressed as
\begin{equation*}
\check{\nabla}_U=d+2\pi\sqrt{-1}(\xi_1dy_1+\xi_2dy_2).
\end{equation*}
Let $\mathcal{L}$ be the line bundle determined by $\check{\nabla}$. Then the degree of its restriction to $E_i$ is given by
\begin{eqnarray*}
\deg\mathcal{L}|_{E_i} & = & \int_{E_i}\frac{\sqrt{-1}}{2\pi}F_{\check{\nabla}}\\
& = & -\int_{E_i}(d\xi_1\wedge dy_1)\\
& = & -(\xi_1(s_i)-\xi_1(s_{i-1})).
\end{eqnarray*}
We have the second equality because $y_2$ is constant (and $x_2=-\lambda=0$) on $E_i$. Hence the isomorphism class of the line bundle $\mathcal{L}$ is completely determined by the increment of the angle coordinate $\xi_1$ on the Lagrangian section $L$ as one moves from $(s_{i-1},0)$ to $(s_i,0)$.

Now we prove Theorem \ref{th:main1}:

\begin{proof}[Proof of Theorem \ref{th:main1}]
Recall that the Hamiltonian isotopy class of the Lagrangian submanifold $L_\gamma$
remains unchanged when we deform $\gamma$ in a fixed homotopy class
relative to the boundary conditions $\lim_{t \to -\infty} |\gamma(t)| = 0$ and
$\lim_{t \to \infty} |\gamma(t)| = \infty$.
So, up to a re-parametrization of $\gamma$, we can deform
the restriction of $\gamma$ to $(s_{i-1},s_i)$ to a path arbitrary close to
the concatenation of $\gamma_0|_{(s_{i-1},s_i)}$ (the negative axis) with
a loop winding around the circle $l_s:=\{z\in\bC^\times||z|=e^{s_i}\}$
for $w_i(\gamma)$ times.
Along $\gamma_0$, the angle coordinate $\xi_1$ is constantly zero, and
$\xi$ increases by one when we wind around $l_s$ once in the anti-clockwise direction.
Hence, the increment $\xi_1(s_i)-\xi_1(s_{i-1})$
is precisely given by the $i$-th winding number $w_i(\gamma)$.
\end{proof}

One way to visualize the Lagrangian submanifold $L_\gamma$ is to observe that the curve $\tau_{s_0}^{s_i}(C_{s_0})$ is given by twisting $C_{s_0}$ in the vanishing cycle for $\sum_{k=1}^iw_i(\gamma)$ times. Correspondingly, the increment of the angle coordinate $\xi_1$ as one goes from $(s_{i-1},0)$ to $(s_i,0)$ is given by the $i$-th winding number $w_i(\gamma)$ of $\gamma$.

\section{Homological mirror symmetry}
 \label{sc:hms}


Let $\scX$ be a smooth toric Fano stack of dimension $d$ and
$\scK$ be the total space of its canonical bundle.
The mirror of $\scX$ is given by a Laurent polynomial
$$
 W : (\bCx)^d \to \bC
$$
whose Newton polytope coincides with the fan polytope of $\scX$.
By choosing sufficiently general $W$,
one may assume that
\begin{itemize}
 \item
$W$ is {\em tame}
in the sense that the gradient $\| \nabla W \|$ is bounded from below
by a positive number
outside of a compact set,
 \item
all the critical points of $W$ are {\em non-degenerate}
in the sense that the Hessian at each critical point
is a non-degenerate quadratic form,
 \item
all the critical values of $W$ are distinct, and
 \item
the origin is not a critical value of $W$.
\end{itemize}
When $\cX = \bP(1, n)$ is the weighted projective line of weight $(1,n)$,
one can take
$$
 W : \bCx \to \bC, \qquad z \mapsto z + 1 / {z^n}
$$
as its mirror.
This is related to the function
$1/z + z^n$ appearing in \eqref{eq:Y'1}
by an automorphism $z \mapsto 1/z$ of the torus,
and our choice here is made
only for aesthetic reason
(Figure \ref{fg:thimbles3} below looks nicer for this choice).
The critical points of $W$ are given by
$
 z = \sqrt[n+1]{n} \zeta_{n+1}^i,
$
$
 i = 0, \ldots, n,
$
with critical values
$
 \sqrt[n+1]{n} (1 + 1/n) \zeta_{n+1}^{i}.
$
Here
$
 \zeta_{n+1} = \exp(2 \pi \sqrt{-1} / (n+1))
$
is a primitive $(n+1)$-st root of unity.

We equip $(\bCx)^d$
with the K\"{a}hler form
\begin{equation*}
 \omega = - \frac{\sqrt{-1}}{2}
  \lb
   \frac{dz_1 \wedge d\bar{z_1}}{|z_1|^2}
    + \cdots
    + \frac{dz_d \wedge d\bar{z_d}}{|z_d|^2}
  \rb,
\end{equation*}
and
define a horizontal distribution on $(\bCx)^d$
as the orthogonal complement
to the tangent spaces to fibers of $W$.
Choose a sufficiently large closed disk $B \subset \bC$
and a point $*$ on the boundary of $B$,
so that all the critical values of $W$
is contained in the interior of $B$.
The restriction $W|_S : S \to B$ of $W$
to the intersection $S$
of $W^{-1}(B)$ and a sufficiently large closed ball in $(\bCx)^d$
is a {\em compact convex Lefschetz fibration}
\cite[Definition 2.14]{MR2497314},
i.e.,
a family of compact convex symplectic manifolds
with at worst non-degenerate critical points.
Here, a {\em compact convex symplectic manifold}
(also known as a {\em Liouville domain})
is an exact symplectic manifold with boundary
whose Liouville vector field
points strictly outward along the boundary.
We will write $W|_S$ as $W$ by abuse of notation.
By rounding the corners of $S$,
one obtains a Liouville domain $M$.
Its {\em completion}
$
 \hat{M} = M \cup_{\partial M} (\partial M \times [1,\infty)),
$
obtained by gluing the positive half of the symplectization
to the boundary,
is symplectomorphic to $(\bCx)^d$.

When $W = z + 1/z^n$,
we fix a large positive real number $*$ as a base point,
and let $B$ to be a closed disk of radius $*$
centered at the origin.
Its inverse image $S := W^{-1}(B)$ is symplectomorphic
to a cylinder $[0, L] \times S^1$ for some $L$,
equipped with the standard symplectic form
$\omega = d r \wedge d \theta$.
The fiber $S_* := W^{-1}(*)$ consists of $n+1$ points,
one of which is approximately $*$
and $n$ of which are approximately $n$-th roots of $1/*$.

With a compact convex Lefschetz fibration,
Abouzaid \cite{Abouzaid_HCRMSTV,Abouzaid_MHTGHMSTV}
associates a category $\scF(S, S_*)$
consisting of exact Lagrangian submanifolds $L$ with boundaries,
which are {\em admissible} in the sense that
\begin{itemize}
 \item
the boundary $\partial L$ is a Lagrangian submanifold
in the interior of $S_*$,
 \item
$L$ projects by $W$ to a curve $\gamma \subset B$
in a neighborhood of $\partial L$, and
 \item
$L$ coincides with the parallel transport of $\partial L$
along $\gamma$ in this neighborhood.
\end{itemize}
For a pair $(L_1, L_2)$ of admissible Lagrangian submanifolds,
the space $\hom_{\cF(S, S_*)}(L_1, L_2)$ of morphisms
is defined as the Lagrangian intersection Floer complex
$CF(L_1^{\epsilon_1}, L_2)$
between $L_1^{\epsilon_1}$ and $L_2$.
Here $L_1^{\epsilon_1}$ is the Lagrangian submanifold
obtained by perturbing the part of $L_1$
fibered over $\gamma_1$
to another Lagrangian submanifold
fibered over the curve $\gamma_1^{\epsilon_1}$ starting from $*$
such that
$
 \arg {\gamma_1^{\epsilon_1}}'(0) < \arg \gamma_2'(0) < \arg {\gamma_1^{\epsilon_1}}'(0) + \pi
$
as in Figure \ref{fg:adlag2},
and one only looks at intersection points
in the interior of $S$.
For a sequence $(L_1, \ldots, L_k)$ of admissible Lagrangians,
the $A_\infty$-operation is defined by perturbing $L_i$
to $L_i^{\epsilon_i}$ such that
$
 \arg {\gamma_1^{\epsilon_1}}'(0) < \arg {\gamma_2^{\epsilon_2}}'(0)
  < \cdots < \arg {\gamma_k^{\epsilon_k}}'(0) < \arg {\gamma_1^{\epsilon_1}}'(0) + \pi
$
and counting virtual numbers of holomorphic disks
bounded by these Lagrangian submanifolds.
\begin{figure}[htbp]
\begin{minipage}{.5 \linewidth}
\centering
\input{adlag1.pst}
\caption{Admissible Lagrangians}
\label{fg:adlag1}
\end{minipage}
\begin{minipage}{.5 \linewidth}
\centering
\input{adlag2.pst}
\caption{Perturbing $L_1$}
\label{fg:adlag2}
\end{minipage}
\end{figure}

A {\em vanishing path} is an embedded path
$\gamma : [0,1] \to B$
from the base point $*$ to a critical value of $\pi$
avoiding other critical values.
We assume that $\gamma$ does not pass through the origin.
By arranging the vanishing cycles along a vanishing path,
one obtains an admissible Lagrangian submanifold of $S$
called the {\em Lefschetz thimble}.
A {\em distinguished basis of vanishing paths}
is a sequence $\bsgamma = (\gamma_1, \ldots, \gamma_m)$
of mutually non-intersecting vanishing paths,
one for each critical value
and ordered according to $- \arg \gamma_i'(0)$.
The corresponding sequence of Lefschetz thimbles will be written as
$
 \bsDelta = (\Delta_1, \ldots, \Delta_m),
$
and the full subcategory of $\scF(S, S_*)$
consisting of $\bsDelta$ will be denoted by
$\scF(\bsDelta)$.
It is expected that
\begin{itemize}
 \item
$\bsDelta$ is a full exceptional collection in $D^b \scF(S, S_*)$,
so that $D^b \scF(\bsDelta)$ is equivalent to $D^b \scF(S, S_*)$
and hence is independent of the choice of $\bsDelta$,
and
 \item
$D^b \scF(S, S_*)$ is equivalent
to the derived category $D^b \scF(W)$
of the {\em Fukaya-Seidel category} of $W$,
defined in \cite[Definition 18.12]{Seidel_PL}
as the $\bZ / 2 \bZ$-invariant part
of the Fukaya category of a branched double cover of
a slight enlargement of $S$.
\end{itemize}

Homological mirror symmetry for toric Fano stacks
can be formulated as follows:

\begin{conjecture}[{Kontsevich \cite{Kontsevich_ENS98}}]
 \label{cj:hms_toric_Fano}
There exists an equivalence
\begin{align} \label{eq:hms_toric_Fano}
 \Psi : D^b \scF(\bsDelta) \to D^b \coh \scX
\end{align}
of triangulated categories.
\end{conjecture}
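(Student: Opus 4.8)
We sketch how one would establish the equivalence \eqref{eq:hms_toric_Fano} in the case $\scX = \bP(1,n)$ relevant to this paper (the general conjecture being, for toric varieties, a theorem of Abouzaid \cite{Abouzaid_HCRMSTV, Abouzaid_MHTGHMSTV}, whose argument adapts to the stack $\bP(1,n)$ as well). The strategy is the usual one for producing a derived equivalence out of exceptional collections: exhibit a full exceptional collection on each side, identify the two $A_\infty$-algebras of endomorphisms, and appeal to the reconstruction of a triangulated category from a generating exceptional collection.

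On the coherent side, $(\scO_{\bP(1,n)}(0), \ldots, \scO_{\bP(1,n)}(n))$ is a full strong exceptional collection in $D^b \coh \bP(1,n)$: fullness and generation are classical for weighted projective lines (they are instances of the Geigle--Lenzing tilting bundles), and strongness amounts to $H^{>0}(\bP(1,n), \scO_{\bP(1,n)}(k)) = 0$ for $0 \le k \le n$, which follows from Serre duality and the isomorphism $\omega_{\bP(1,n)} \cong \scO_{\bP(1,n)}(-1-n)$. Hence $D^b \coh \bP(1,n) \cong D^b(\operatorname{mod} A)$ for the finite-dimensional algebra $A = \operatorname{End}\bigl( \bigoplus_{i=0}^n \scO_{\bP(1,n)}(i) \bigr)$, concentrated in degree $0$. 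Writing $\Hom(\scO_{\bP(1,n)}(i), \scO_{\bP(1,n)}(j)) = H^0(\bP(1,n), \scO_{\bP(1,n)}(j-i))$ as the span of weighted monomials $x^a y^b$ with $a + n b = j-i$, one sees that $A$ is the path algebra, without relations, of the quiver with vertices $0, \ldots, n$, one arrow $i \to i+1$ for each $i$, and one extra arrow $0 \to n$ (the monomial $y$).

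On the symplectic side, by Seidel's theory the distinguished basis of Lefschetz thimbles $\bsDelta = (\Delta_0, \ldots, \Delta_n)$ is an exceptional collection generating $D^b \scF(S, S_*)$, so $D^b \scF(\bsDelta) \cong D^b(\operatorname{mod} B)$ with $B$ the $A_\infty$-endomorphism algebra of $\bigoplus_i \Delta_i$. For $W = z + 1/z^n$ the fiber $S_* = W^{-1}(*)$ consists of $n+1$ points --- one near the large base point $*$, the others near the $n$-th roots of $1/*$ --- so each Floer complex $\hom_{\scF}(\Delta_i, \Delta_j)$ has an explicit combinatorial basis of intersection points inside the cylinder $S \cong [0, L] \times S^1$, and a computation of the gradings places the Floer cohomology in degree $0$. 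Since the cohomology is concentrated in degree $0$, the minimal model of $B$ carries no higher products (for degree reasons $m_k = 0$ for $k \ge 3$) and $B$ is formal; its cohomology algebra is then read off from the immersed holomorphic polygons in $S$, which are again combinatorial, and one checks $H^*(B) \cong A$ with the monomial bases matched up. The reconstruction theorem then yields the equivalence \eqref{eq:hms_toric_Fano} sending $\Delta_i \mapsto \scO_{\bP(1,n)}(i)$; this is the equivalence \eqref{eq:wpl-hms} used in Theorem \ref{th:main2}.

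The main obstacle is not the bookkeeping with weighted monomials but the foundational identifications packaged into the two \emph{expected} statements preceding the conjecture: that $\bsDelta$ genuinely forms a full exceptional collection in Abouzaid's category $\scF(S, S_*)$, and that this category is equivalent to the Fukaya--Seidel category $\scF(W)$ of \cite[Definition 18.12]{Seidel_PL}. In particular the perturbation scheme for admissible Lagrangians must be arranged so that the polygon counts defining the $A_\infty$-products are genuinely localized near the $(n+1)$-point fiber $S_*$ rather than demanding a transversality analysis in the ambient $\bCx$; granting this, the identification $B \cong A$ is straightforward.
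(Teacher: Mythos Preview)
The statement you are addressing is recorded in the paper as a \emph{conjecture}, not a theorem: the paper does not supply its own proof, attributing the general statement to Kontsevich and pointing (in the Introduction, around \eqref{eq:wpl-hms}) to Abouzaid's work for the toric variety case. So strictly speaking there is no ``paper's own proof'' to compare against.

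That said, your sketch for $\scX = \bP(1,n)$ is correct, and it is essentially the same computation the paper carries out---in a stronger form---inside the proof of Theorem~\ref{th:ring_isom}. There the paper draws the thimbles on the cylinder (Figure~\ref{fg:thimbles6}), labels the intersection points of $\Delta_{-k(n+1)+i}$ with $\Delta_0$ by monomials $x^a y^b$, observes that suitable gradings put all Floer groups in degree zero so there are no higher $A_\infty$-operations, and checks that the two types of triangles (Figures~\ref{fg:triangle1} and \ref{fg:triangle2}) reproduce polynomial multiplication. Restricting that calculation to $k=0$ is exactly your identification of $B$ with $A$; your quiver description (arrows $x : i \to i{+}1$ and one extra arrow $y : 0 \to n$, no relations) is the same information. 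The only difference is scope: the paper bundles this into a proof of Conjecture~\ref{cj:ring_isom} (the isomorphism $\cA \cong \cB$ together with $\frakT \leftrightarrow \frakS$), rather than isolating Conjecture~\ref{cj:hms_toric_Fano} as a standalone statement. Your closing caveat about the two ``expected'' bullet points is also in line with how the paper treats those foundational identifications.
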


Let $\sigma : S \to S$ be a Hamiltonian diffeomorphism
which covers the Dehn twist
$\sigmabar : B \to B$
along a circle near the boundary of $B$.
The resulting push-forward functor
$\sigma_* : \scF(S, S_*) \to \scF(S, S_*)$
is an autoequivalence of the Fukaya category,
which wraps a Lagrangian
as shown in Figure \ref{fg:adlag3}.
\begin{figure}[htbp]
\begin{minipage}{\linewidth}
\centering
\input{adlag3.pst}
\caption{The functor $\sigma_*$}
\label{fg:adlag3}
\end{minipage}
\end{figure}

One can equip the direct sum
$$
 \cA = \bigoplus_{i, j=0}^m \bigoplus_{k=0}^\infty
  \Hom_{\cF(S, S_*)}(\sigma^{k}(\Delta_i), \Delta_j)
$$
with a ring structure given by
$$
\begin{array}{ccccc}
 \Hom(\sigma^{k_2}(\Delta_{i_2}), \Delta_{i_3})
  & \otimes &
 \Hom(\sigma^{k_1}(\Delta_{i_1}), \Delta_{i_2})
  & \to &
 \Hom(\sigma^{k_1+k_2}(\Delta_{i_1}), \Delta_{i_3}) \\
 \vin & & \vin & & \vin \\
 x_2 & & x_1 & \mapsto &\frakm_2(x_2, \sigma_*^{k_2}(x_1)).
\end{array}
$$
The continuation map
$$
 \frakt : \Hom_{\cF(S, S_*)}(\sigma^k(\Delta_i), \Delta_j)
  \to \Hom_{\cF(S, S_*)}(\sigma^{k+1}(\Delta_i), \Delta_j)
$$
in Floer theory,
obtained by counting solutions to inhomogeneous Cauchy-Riemann equation
(cf. e.g \cite[(3.35)]{Abouzaid-Seidel_OSAVF}),
induces an endomorphism $\frakT$ of this ring.

On the mirror side,
there is a distinguished autoequivalence
\begin{align*}
 S(-) = - \otimes \omega_\scX[d] : D^b \coh \scX \to D^b \coh \scX
\end{align*}
called the {\em Serre functor}
\cite{Bondal-Kapranov_Serre}.
Let $s \in H^0(\omega_\scX^\vee)$ be a section
characterized by the property that
the zero locus $s^{-1}(0)$ is the union of all the toric divisors of $\scX$.
This section induces a natural transformation
\begin{align} \label{eq:nat_transf2}
 \fraks : S[-d] \to \id
\end{align}
which acts on objects by multiplication by $s$ :
$
 S[-d](X) = X \otimes \omega_\scX \to \id(X) = X.
$
One can equip the direct sum
$$
 \cB =
 \bigoplus_{i, j=0}^m \bigoplus_{k=0}^\infty
  \Hom_\scX(E_i \otimes \omega_\scX^k, E_j)
$$
with a ring structure given by
\begin{align*}
 \Hom(E_{i_2} \otimes \omega_\cX^{k_2}, E_{i_3})
  &\otimes
 \Hom(E_{i_1} \otimes \omega_\cX^{k_1}, E_{i_2}) \\
  &\simto
 \Hom(E_{i_2} \otimes \omega_\cX^{k_2}, E_{i_3})
  \otimes
 \Hom(E_{i_1} \otimes \omega_\cX^{k_1+k_2}, E_{i_2} \otimes \omega_\cX^{k_2}) \\
  &\to
 \Hom(E_{i_1} \otimes \omega_\cX^{k_1+k_2}, E_{i_3}).
\end{align*}
The natural transformation \eqref{eq:nat_transf2}
induces a ring endomorphism $\frakS$ of $\cB$.

Assume that the collection
$(E_i)_{i=1}^m$ is {\em cyclic}
in the sense that
$$
 \Ext^i(E_k \otimes \omega_\cX^j, E_\ell) = 0
  \ \text{for any $i \ne 0$, any $j \ge 0$ and any $k, \ell \in \{ 1, \ldots, m\}$}.
$$
This implies that the ring $\cB$ is concentrated in degree zero,
so that there are no higher $A_\infty$-operations
for degree reason.
Although cyclicity is a strong condition,
it is known that any toric Fano stack in dimension two
has a cyclic full exceptional collection of line bundles
\cite{Ishii-Ueda_DMEC}.
It is not known whether any toric Fano stack
has a cyclic full exceptional collection of complexes.

\begin{conjecture}[{Kontsevich, Seidel \cite[Example 5]{Seidel_SHHH}}] \label{cj:ring_isom}
There is a ring isomorphism
$\cA \simto \cB$
sending the ring endomorphism
$\frakT$ to $\frakS$.
\end{conjecture}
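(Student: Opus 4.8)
The plan is to deduce the conjecture from homological mirror symmetry for $\scX$ itself, i.e.\ from Conjecture~\ref{cj:hms_toric_Fano} (a theorem when $\scX=\bP(1,n)$), by upgrading the equivalence $\Psi\colon D^b\scF(\bsDelta)\simto D^b\coh\scX$, $\Delta_i\mapsto E_i$, to a graded ring isomorphism $(\cA,\frakT)\simto(\cB,\frakS)$. Cyclicity of $(E_i)$ enters twice: it concentrates $\cB$ in degree zero, and it makes $\scF(\bsDelta)$ formal, so that $\cA$ is likewise concentrated in degree zero and no higher $A_\infty$-operations intervene. Moreover $\cB$ has an intrinsic description: writing $p\colon\scK\to\scX$ for the total space of $\omega_\scX$, so that $p_*\scO_\scK=\bigoplus_{k\ge0}\omega_\scX^{-k}$, one has $\cB\cong\bigoplus_{i,j}\Hom_\scK(p^*E_i,p^*E_j)$, the endomorphism algebra (in degree zero, by cyclicity) of $\bigoplus_i p^*E_i$, graded by the fiberwise weight $k$, and $\frakS$ is multiplication by the global function $\tilde s\in H^0(\scK,\scO_\scK)$ induced by the anticanonical section $s\in H^0(\scX,\omega_\scX^\vee)$. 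The first step is then the degree-zero case: $\Psi$ identifies $\bigoplus_{i,j}\Hom_{\scF(S,S_*)}(\Delta_i,\Delta_j)$ with $A:=\bigoplus_{i,j}\Hom_\scX(E_i,E_j)$, which is the $k=0$ part of both $\cA$ and $\cB$.

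The second step is to match the gradings, by identifying the autoequivalence $\sigma_*$ of $\scF(S,S_*)$ with the shifted inverse Serre functor $S[-d]=(-)\otimes\omega_\scX$ on $D^b\coh\scX$. On the symplectic side, $\sigma$ lifts the boundary Dehn twist $\sigmabar$, which represents the global monodromy of the Lefschetz fibration $W|_S$, and by Seidel's study of Lefschetz fibrations the global monodromy induces, up to a shift by $d$, the inverse Serre functor of $\scF(S,S_*)$. On the other side the Serre functor of $D^b\coh\scX$ is $(-)\otimes\omega_\scX[d]$, and any triangulated equivalence commutes with Serre functors, so $\Psi\,\sigma_*\simeq(S[-d])\,\Psi$ up to the (harmless, since everything is in degree zero) bookkeeping of shifts. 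Combined with the first step, $\Psi$ now carries the $k$-th graded piece $\bigoplus_{i,j}\Hom_{\scF(S,S_*)}(\sigma_*^k\Delta_i,\Delta_j)$ to $\bigoplus_{i,j}\Hom_\scX(E_i\otimes\omega_\scX^k,E_j)$, and the two product structures --- both Yoneda composition twisted by the respective autoequivalence --- correspond term by term, yielding a graded ring isomorphism $\cA\simto\cB$.

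The third step, which I expect to be the main obstacle, is to check that this isomorphism carries the Floer-theoretic continuation endomorphism $\frakt$ to $\frakS$, equivalently that it sends the distinguished continuation class in $\Hom_{\scF(S,S_*)}(\sigma_*\Delta_i,\Delta_i)$ to the section $s\in\Hom_\scX(E_i\otimes\omega_\scX,E_i)=H^0(\omega_\scX^\vee)$ cut out by the toric boundary. This is not formal: $\mathrm{Nat}(S[-d],\id)\cong H^0(\omega_\scX^\vee)$ is typically more than one-dimensional, so one must pin down the \emph{specific} natural transformation that is mirror to $\frakt$. The relevant input is Seidel's description of continuation maps as counts of sections of the Lefschetz fibration: one shows that the class ``filling in'' one turn of the boundary monodromy corresponds under $\Psi$ to the distinguished generator of $H^0(\omega_\scX^\vee)$, the hypothesis that the origin is not a critical value of $W$ being what keeps the boundary monodromy clean and makes the comparison tractable. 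The cleanest fully rigorous route to this last step --- and a self-contained proof for $\scX=\bP(1,n)$, which is all that Theorem~\ref{th:main2} requires --- is a direct computation: there $S$ is a cylinder with $n+1$ points in the fiber, the thimbles $\Delta_0,\dots,\Delta_n$ and their iterates $\sigma_*^k\Delta_i$ are arcs, all structure constants are counts of immersed polygons, and one produces an explicit bijection between intersection points of $\sigma_*^k\Delta_i$ with $\Delta_j$ and monomials $x^ay^b$ of weighted degree $j-i+(n+1)k$ in $\bC[x,y]$ (with $\deg x=1$, $\deg y=n$) under which $\frakm_2$ becomes polynomial multiplication and $\frakt$ becomes multiplication by $xy=s$. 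Two caveats should be recorded: in the stated generality the argument is conditional on Conjecture~\ref{cj:hms_toric_Fano}, and cyclicity of a full exceptional collection is not known for arbitrary toric Fano stacks, so an unconditional proof along these lines is available only in low dimension, in particular for $\bP(1,n)$.
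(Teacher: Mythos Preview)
The statement you are addressing is stated in the paper as a \emph{conjecture}, not a theorem; the paper does not attempt to prove it in the generality of an arbitrary toric Fano stack. What the paper actually proves is the special case $\scX=\bP(1,n)$, $W=z+1/z^n$ (recorded there as a separate theorem), and it does so by exactly the direct computation you outline in your final paragraph: the cylinder $S$ is drawn explicitly, the wrapped thimbles $\sigma^k(\Delta_i)=\Delta_{-k(n+1)+i}$ are described as arcs, their intersection points with $\Delta_0$ (and more generally with $\Delta_j$) are put in bijection with monomials $x^ay^b$ of the correct weighted degree, gradings are checked to be zero, and $\frakm_2$ is read off from two families of immersed triangles and matched with polynomial multiplication in $\bC[x,y]$. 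So your fallback route and the paper's route coincide.

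Your first two and a half steps, by contrast, constitute a genuinely different and more structural program than anything the paper attempts: deduce the ring isomorphism from Conjecture~\ref{cj:hms_toric_Fano} together with the identification of $\sigma_*$ with the shifted inverse Serre functor, and then pin down $\frakt\mapsto\frakS$ by identifying the continuation class with the toric anticanonical section. This would, if it went through, give a conceptual proof in much greater generality. You have correctly isolated the real obstruction: step~3 is not formal, since $H^0(\omega_\scX^\vee)$ is typically large and one must single out the specific natural transformation mirror to the continuation map. The paper sidesteps this entirely by working only with $\bP(1,n)$ and computing everything by hand; your general argument remains a plausible but incomplete sketch, exactly as you flag in your caveats.
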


It is further expected that $\frakt$ is the first component
of a canonical natural transform
\begin{align} \label{eq:nat_transf1}
 \frakt : \sigma_* \to \id
\end{align}
which is mirror to the natural transformation \eqref{eq:nat_transf2}.

When $W = z + 1/z^n$,
we take a distinguished basis
$(\gamma_i)_{i=0}^n$
of vanishing paths as in Figure \ref{fg:thimbles1}.
The corresponding Lefschetz thimbles $(\Delta_i)_{i=0}^n$
are Lagrangian submanifolds of $S$
with boundaries on $S_*$
as shown in \pref{fg:thimbles3}.
The Hamiltonian diffeomorphism
$\sigma : S \to S$ behaves as
$z \mapsto \exp(2 \pi \sqrt{-1}) z$ for $|z| \gg 1$, and
$z \mapsto \exp(- 2 \pi \sqrt{-1} / n) z$ for $|z| \ll 1$.
We write the images of the Lefschetz thimbles as
$\Delta_{- (n+1) k + i} = \sigma^k(\Delta_i)$.

\begin{figure}[htbp]
\begin{tabular}[b]{cc}
\begin{minipage}{.5 \linewidth}
\centering
\vspace{7mm}
\input{thimbles1.pst}
\caption{Vanishing paths on the $W$-plane}
\label{fg:thimbles1}
\end{minipage}
&
\begin{minipage}{.5 \linewidth}
\centering
\vspace{-10mm}
\input{thimbles3.pst}
\caption{Lefschetz thimbles on the $z$-plane}
\label{fg:thimbles3}
\end{minipage}
\end{tabular}
\end{figure}


On the mirror side,
we write the homogeneous coordinate ring of $\scX = \bP(1, n)$ as $\bC[x,y]$
where $\deg x = 1$ and $\deg y = n$.
The canonical bundle is given by $\omega_\scX = \scO_\cX(-n-1)$,
and the full exceptional collection
$$
 (E_0, E_1, \ldots, E_n) = (\scO_\cX, \scO_\cX(1), \ldots, \scO_\cX(n))
$$
of line bundles is cyclic.


\begin{theorem} \label{th:ring_isom}
\pref{cj:ring_isom} holds
for $\scX = \bP(1,n)$ and $W = z + 1/z^n$.
\end{theorem}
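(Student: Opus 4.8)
The plan is to compute $\cA$ and $\cB$ explicitly in the case $\scX = \bP(1,n)$ and to identify them as rings in a way compatible with $\frakT$ and $\frakS$. Computing $\cB$ is elementary: writing $E_i \otimes \omega_\scX^k = \scO_\scX(i - (n+1)k)$ and using cyclicity of $(E_0, \dots, E_n) = (\scO_\scX, \dots, \scO_\scX(n))$, one gets
\[
 \cB \;=\; \bigoplus_{i, j = 0}^{n} \bigoplus_{k \ge 0} \Hom_\scX\!\left( \scO_\scX(i - (n+1)k),\, \scO_\scX(j) \right) \;=\; \bigoplus_{i, j = 0}^{n} \bigoplus_{k \ge 0} H^0\!\left( \scO_\scX(j - i + (n+1)k) \right),
\]
concentrated in degree zero, with multiplication induced by polynomial multiplication in $\bC[x, y]$, and with $\frakS$ equal to multiplication by the section $s = xy \in H^0(\omega_\scX^\vee) = H^0(\scO_\scX(n+1))$ cutting out the union of the toric divisors $\{ x = 0 \}$ and $\{ y = 0 \}$. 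It therefore remains to realise $\cA$ in exactly this form.

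First I would treat the part with $k = 0$, which is precisely homological mirror symmetry for $\bP(1,n)$, i.e.\ \pref{cj:hms_toric_Fano} for $\scX = \bP(1,n)$; this holds as a special case of Abouzaid's theorem and produces a quasi-equivalence $D^b \scF(\bsDelta) \simto D^b \coh \bP(1,n)$ sending $\Delta_i$ to $\scO_\scX(i)$. Since the exceptional collection is cyclic, this is a genuine degree-zero algebra isomorphism $\bigoplus_{i,j=0}^n \Hom_{\cF(S, S_*)}(\Delta_i, \Delta_j) \simto \bigoplus_{i,j=0}^n \Hom_\scX(\scO_\scX(i), \scO_\scX(j))$. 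It is useful to see this directly in the cylinder model $S \cong [0, L] \times S^1$: with $S_*$ the $n+1$ marked boundary points and $\bsDelta$ a chain of embedded arcs, the admissibility perturbation makes each Floer complex have vanishing differential, concentrated in degree $0$, of rank $\dim H^0(\scO_\scX(j - i))$, with the $\frakm_2$-products given by counts of immersed triangles in $S$ that reproduce polynomial multiplication.

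Next I would extend the computation to all $k \ge 0$ by tracking the thimbles under $\sigma$. Using that $\sigma : S \to S$ acts near the two ends of the cylinder by $z \mapsto e^{2\pi\sqrt{-1}} z$ and by $z \mapsto e^{-2\pi\sqrt{-1}/n} z$, one describes the wrapped thimbles $\sigma^{k}(\Delta_i) = \Delta_{-(n+1)k+i}$ of the doubly infinite helix $(\Delta_m)_{m \in \bZ}$ of embedded arcs explicitly, and a winding-number count of interior intersection points gives $\Hom_{\cF(S,S_*)}(\sigma^k(\Delta_i), \Delta_j) \cong H^0(\scO_\scX(j - i + (n+1)k))$ as graded vector spaces, again concentrated in degree $0$; in particular $\cA$, like $\cB$, has no higher $A_\infty$ operations. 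The ring structures then match because the composition $\Hom(\sigma^{k_2}\Delta_{i_2}, \Delta_{i_3}) \otimes \Hom(\sigma^{k_1}\Delta_{i_1}, \Delta_{i_2}) \to \Hom(\sigma^{k_1 + k_2}\Delta_{i_1}, \Delta_{i_3})$ is once more a count of immersed triangles in $S$ reproducing polynomial multiplication; equivalently, $\sigma_*$ is identified with the functor $S[-d] = - \otimes \omega_\scX$ (here $d = 1$) on $D^b \coh \bP(1,n)$. Finally, I would compute the continuation map $\frakt : \Hom(\sigma^k\Delta_i, \Delta_j) \to \Hom(\sigma^{k+1}\Delta_i, \Delta_j)$, a count of solutions to an inhomogeneous Cauchy--Riemann equation interpolating between the $k$- and $(k+1)$-fold wrappings, and show that under the identification above it becomes exactly multiplication by $s = xy$. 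This gives $\frakT \mapsto \frakS$ and completes the ring isomorphism $\cA \simto \cB$.

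The step I expect to be the main obstacle is making this picture uniform in $k$ and upgrading the matching of ranks to a genuine ring isomorphism intertwining $\frakT$ and $\frakS$. One must fix compatible brane (grading and orientation) structures on all of the wrapped thimbles $\sigma^k\Delta_i$, verify that every Floer generator really lies in degree $0$ (a priori arcs in a surface contribute in degrees $0$ and $1$), control the signs in the triangle counts, and --- the crucial point --- prove that the Floer-theoretic continuation map is \emph{exactly} multiplication by the mirror section $s$, rather than merely an injection of the correct rank. Abstract knowledge of homological mirror symmetry for $\bP(1,n)$ does not by itself pin down the shift in the autoequivalence mirror to $\sigma_*$, nor the identification $\frakt \leftrightarrow \fraks$; both rely on the explicit cylinder computation.
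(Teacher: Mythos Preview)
Your proposal is essentially the same approach as the paper's: both compute $\cA$ directly on the cylinder $S \cong [0,L] \times S^1$ by describing the wrapped thimbles $\sigma^k(\Delta_i)$ explicitly, counting their intersections with $\Delta_j$ and labelling them by monomials in $\bC[x,y]$, checking degree-zero concentration so that no higher $A_\infty$-operations arise, and then matching immersed-triangle counts with polynomial multiplication. The only stylistic difference is that you invoke Abouzaid's theorem for the $k=0$ slice before redoing it by hand, and you single out the identification $\frakt \leftrightarrow$ multiplication by $s = xy$ as the delicate step; in the paper this last point is absorbed into the choice of monomial labelling (the labels are arranged so that one extra wrap shifts each intersection point by one step on either side of the critical circle, i.e.\ multiplies by $x$ and by $y$), rather than being isolated as a separate continuation-map computation.
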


\begin{proof}
The Lefschetz thimble $\Delta_{-k(n+1)+i}$ starts
from the critical point $\sqrt[n+1]{n} \zeta_{n+1}^i$
and extends in two directions:
One wraps around the origin $k/n$ times and
goes to the point in $S_*$
approximated by $\zeta_n^{i-k} \cdot *^{-1/n}$.
The other wraps around infinity $k$ times and
asymptotes to the point in $S_*$
approximated by $*$.
Figure \ref{fg:thimbles6} shows the picture of two Lefschetz thimbles.
Here the top and the bottom edges of the rectangle are identified
to form the cylinder $[0, L] \times S^1$,
which is symplectomorphic to $S$.
The vertical dotted line shows the locus
where the absolute value is $\sqrt[n+1]{n}$.
The Lefschetz thimble $\Delta_0$ is just the real line,
and the Lefschetz thimble $\Delta_{-k(n+1)+i}$
is obtained from the Lefschetz thimble $\Delta_i$
by wrapping $k$ times.

\begin{figure}[htbp]
\centering
\input{thimbles6.pst}
\caption{Intersections of Lefschetz thimbles}
\label{fg:thimbles6}
\end{figure}

Write $k(n+1) - i = k' n + i'$ for $k' \in \bZ$ and $i' \in \{ 0, \ldots, n-1 \}$.
Then the thimbles $\Delta_{-k(n+1)+i}$ and $\Delta_0$
intersect at $k' + 1$ points,
and we label these intersection points
by the basis of $\Hom(\scO_{\bP(1,n)}(-k(n+1)+i), \scO_{\bP(1,n)})$
as shown in Figure \ref{fg:thimbles6}.
Intersections between other Lefschetz thimbles
can be labeled similarly.
By choosing the standard grading on $S$
determined by the holomorphic volume form $dz/z$
and suitable gradings on $\Delta_i$'s,
one can arrange that $\Hom_{\cF(S, S_*)}(\Delta_i, \Delta_j)$
for $i \le j$ have degree zero
(and those for $i > j$ have degree one).
It follows that the ring
$
 \cA = \bigoplus_{i, j=0}^m \bigoplus_{k=0}^\infty
  \Hom_{\cF(S, S_*)}(\sigma^{k}(\Delta_i), \Delta_j)
$
is concentrated in degree zero,
so that there are no higher $A_\infty$-operations
on $\cA$.
The multiplication on $\cA$
comes from a triangle on $S$,
which is either of the ones
shown in Figures \ref{fg:triangle1} and \ref{fg:triangle2}.
\begin{figure}[htbp]
\begin{minipage}{.5 \linewidth}
\centering
\input{triangle1.pst}
\caption{A triangle}
\label{fg:triangle1}
\end{minipage}
\begin{minipage}{.5 \linewidth}
\centering
\input{triangle2.pst}
\caption{Another triangle}
\label{fg:triangle2}
\end{minipage}
\end{figure}
This clearly matches the multiplication on $\cB$
(which is just the multiplication of polynomials),
and \pref{th:ring_isom} is proved.
\end{proof}

For $q \in \Hom_{\cF(S, S_*)} \lb \sigma^{w}(\Delta_{i_0}), \Delta_{i_1} \rb$,
we define $\ord(q)$ as the maximal integer $d$
such that $x$ is in the image of
$$
 \frakt^d : \Hom_{\cF(S, S_*)} \lb \sigma^{w-d}(\Delta_{i_0}), \Delta_{i_1} \rb
  \to \Hom_{\cF(S, S_*)} \lb \sigma^{w}(\Delta_{i_0}), \Delta_{i_1} \rb.
$$
\pref{pr:degree} below is an analogue of \cite[Proposition 4.3]{Pascaleff_FCMPP},
which will be useful later.

\begin{proposition} \label{pr:degree}
If a holomorphic triangle $\varphi : D^2 \to S$
contributes to the composition
$
 x^{i_2}y^{j_2} = \frakm_2(x^{i_1} y^{j_1}, x^{i_0} y^{j_0})
$
for
$
 x^{i_0} y^{j_0} \in \Hom_{\cF(S, S_*)}(\Delta_{k_0}, \Delta_{k_1}),
$
$
 x^{i_1} y^{j_1} \in \Hom_{\cF(S, S_*)}(\Delta_{k_1}, \Delta_{k_2}),
$
and
$
 x^{i_2} y^{j_2} \in \Hom_{\cF(S, S_*)}(\Delta_{k_0}, \Delta_{k_2}),
$
then the intersection number
between this triangle and the divisor $S_0$ is given by
\begin{align} \label{eq:degree}
 \varphi(D^2) \cdot S_0
  =  \ord(x^{i_2} y^{j_2}) - \ord(x^{i_0} y^{i_0}) - \ord(x^{i_1} y^{j_1}).
\end{align}
\end{proposition}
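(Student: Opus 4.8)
The plan is to compute the two sides of \eqref{eq:degree} separately and match them. For the left-hand side, recall that $S_0 = W^{-1}(0)$ is a smooth fibre of $W$ (the origin is not a critical value), and that the vanishing paths $\gamma_{k_0},\gamma_{k_1},\gamma_{k_2}$, together with the small perturbations used to define the $A_\infty$-operations, have been chosen to avoid the origin; hence the Lefschetz thimbles $\Delta_{k_0},\Delta_{k_1},\Delta_{k_2}$ (and their perturbations) are disjoint from $S_0$. I would choose the almost complex structure to be fibred (i.e.\ so that $W$ is holomorphic) in a neighbourhood of $S_0$, which is compatible with a generic choice elsewhere. Then $W\circ\varphi$ is holomorphic near $\varphi^{-1}(S_0)$, so at each point of $\varphi^{-1}(S_0)$ the local intersection multiplicity of $\varphi$ with $S_0$ equals the vanishing order of $W\circ\varphi$ there and is in particular positive; summing, $\varphi(D^2)\cdot S_0$ equals the algebraic count of preimages of $0$ under the smooth map $W\circ\varphi : D^2 \to \bC$, which — since the boundary avoids $0$ — equals the winding number around $0$ of the loop $W\circ\varphi|_{\partial D^2}$.

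For the right-hand side, I would invoke \pref{th:ring_isom}: under the ring isomorphism $\cA \simto \cB$ it provides, the endomorphism $\frakT$ induced by the continuation map corresponds to $\frakS$, which is multiplication by the section $s = xy \in H^0(\omega_{\bP(1,n)}^\vee) = H^0(\scO_{\bP(1,n)}(n+1))$ cutting out the toric boundary. Hence for a generator $x^a y^b \in \Hom_{\cF(S,S_*)}(\sigma^w(\Delta_k),\Delta_l)$ the quantity $\ord(x^a y^b)$ is the largest $d$ for which $x^{a-d}y^{b-d}$ is still a monomial of non‑negative degree, i.e.\ $\ord(x^a y^b) = \min(a,b)$ (the condition $d\le w$ is automatically implied). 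Since $\frakm_2$ corresponds to multiplication of monomials we have $x^{i_2}y^{j_2} = x^{i_0+i_1}y^{j_0+j_1}$, so the right-hand side of \eqref{eq:degree} equals the combinatorial quantity $\min(i_0+i_1,\,j_0+j_1) - \min(i_0,j_0) - \min(i_1,j_1)$, which is a non-negative integer, in accordance with the positivity obtained above.

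It then remains to compute the winding number of $W\circ\varphi|_{\partial D^2}$ and to verify that it equals this expression. The loop $W\circ\varphi|_{\partial D^2}$ is the concatenation of the three sub-arcs of $\gamma_{k_0},\gamma_{k_1},\gamma_{k_2}$ joining the $W$-images of the three intersection points labelled by $x^{i_0}y^{j_0}$, $x^{i_1}y^{j_1}$, $x^{i_2}y^{j_2}$; in particular its homotopy class in $\bC\setminus\{0\}$, and hence $\varphi(D^2)\cdot S_0$, depends only on the three vertices, so all contributing triangles give the same answer. I would carry out the computation using the explicit description of the wrapped thimbles $\Delta_{-k(n+1)+i}=\sigma^k(\Delta_i)$ and of their intersection points from the proof of \pref{th:ring_isom} (\pref{fg:thimbles6}), together with the two possible shapes of the holomorphic triangle recorded in \pref{fg:triangle1} and \pref{fg:triangle2}: in each case one reads off how many sheets of $S_0$ are enclosed by the triangle — equivalently, how far the relevant portions of the thimbles have been wrapped past $S_0$ — and checks that the count equals $\min(i_0+i_1,j_0+j_1) - \min(i_0,j_0) - \min(i_1,j_1)$. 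The main obstacle is precisely this last piece of bookkeeping: fixing the orientations of the three edges, tracking the exact relation between the wrapping number of a thimble and the positions (hence the $W$-values) of its intersection points with the other two, and controlling the perturbed thimbles near the fibre $S_0$. This is the analogue of the computation in \cite[Proposition~4.3]{Pascaleff_FCMPP}, and I expect it to go through in the same way.
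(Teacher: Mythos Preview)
Your proposal is correct and follows essentially the same route as the paper: both identify $\ord(x^a y^b)=\min(a,b)$ (the paper via the factorization $x^{i_\ell}y^{j_\ell}=(xy)^{d_\ell}x^{i_\ell-d_\ell}y^{j_\ell-d_\ell}$) and then reduce the left-hand side to the explicit combinatorics of the triangles in \pref{fg:thimbles6}--\pref{fg:triangle2}. The paper is marginally more direct in that it stays in the cylinder $S$ and simply counts how many of the $n+1$ points of $S_0$ (sitting on the vertical line $|z|=1$) lie inside the triangle, via a short case split on whether the two input vertices lie on the same side of that line or on opposite sides, rather than passing through the winding number of $W\circ\varphi$ in the $W$-plane.
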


\begin{proof}
Since $\sqrt[n+1]{n}$ is close to $1$,
one can perturb Figure \ref{fg:thimbles6} slightly
to set the dotted vertical line to be the unit circle on $\bCx$
without changing the intersections
of triangles with $S_0$.
Then $S_0$ is equidistributed on the dotted vertical line
with vertical coordinates $(2 j + 1) \pi / (2 n + 2)$ for $j = 0, \ldots, n$.

We write $d_\ell = \ord(x^{i_\ell} y^{j_\ell})$ and
$x^{i_\ell} y^{j_\ell} = (x y)^{d_\ell} x^{i_\ell-d_\ell} y^{j_\ell-d_\ell}$
for $\ell = 0, 1, 2$.
If $i_\ell - d_\ell = 0$ for $\ell = 0, 1$ or
$j_\ell - d_\ell = 0$ for $\ell = 0, 1$,
then the whole triangle is either
on the left or on the right of the dotted vertical line
in Figure \ref{fg:thimbles6},
and both sides of \eqref{eq:degree} is zero.
If $a := i_0 - d_0 > 0$ and $b := j_1 - d_1 > 0$,
then the triangle $D$ looks as in Figure \ref{fg:triangle1},
and the vertical dotted line cuts $D$ into two.
The number of points in $S_0$
on the part of the vertical dotted line
bounded by $\Delta_{i_0}$ and $\Delta_{i_1}$ is given by $a$,
and that by $\Delta_{i_1}$ and $\Delta_{i_2}$ is given by $b$
(here we are working on the universal cover of the cylinder $S$).
The vertical dotted line is on the right or the left
of the vertex $x^{i_2} y^{j_2}$
depending on whether $a > b$ or $a < b$
(and exactly on the line if $a = b$).
It follows that
both sides of \eqref{eq:degree} is given by $\max \{ a, b \}$
in this case.
The case $j_0 - d_0 > 0$ and $i_1 - d_1 > 0$ is similar,
and Proposition \ref{pr:degree} is proved.
\end{proof}

An admissible Lagrangian submanifold $\Delta$ of $S$
can be completed to a Lagrangian submanifold
$\hat{\Delta} := \Delta \cup_{\partial \Delta} (\partial \Delta \times [1, \infty))$
of the completion $\hat{M}$.
Given a pair $(\Delta_1, \Delta_2)$ of admissible Lagrangians,
the {\em wrapped Floer cohomology}
\cite{Abouzaid-Seidel_OSAVF}
is defined by
$$
 \HW(\hat{\Delta}_1, \hat{\Delta}_2) := \varinjlim_w \HF(\phi_w(\hat{\Delta}_1), \hat{\Delta}_2),
$$
where $\phi_w$ is the time $w$ flow
by a Hamiltonian $H$
which behaves as
$
 H(x, r) = r
$
on the cylindrical end
$(x, r) \in \partial M \times [1, \infty)$
of $\hat{M}$.
Note that if $\Delta$ is compact in the fiber direction of $W$,
then $\phi_w(\hat{\Delta})$ is isotopic to the completion of $\sigma^w(\Delta)$
through compactly-supported Hamiltonian diffeomorphisms.
If $\Delta$ is mirror to $\scO_\cX$,
then Conjectures \ref{cj:hms_toric_Fano} and \ref{cj:ring_isom} give
\begin{align*}
 \HW(\hat{\Delta}, \hat{\Delta})
  &\cong \varinjlim_w HF(\phi_w(\hat{\Delta}), \hat{\Delta}) \\
  &\cong \varinjlim_w HF(\sigma^w(\Delta), \Delta) \\
  &\cong \varinjlim_w \Hom(\omega_\cX^{\otimes w}, \cO_\cX) \\
  &\cong \varinjlim_w H(\omega_\cX^{\otimes (-w)}) \\
  &\cong \bC[\scX \setminus s^{-1}(0)],
\end{align*}
where
$
 \bC[\cX \setminus s^{-1}(0)]
  \cong \bC[x_1^{\pm 1}, \ldots, x_d^{\pm 1}]
$
is the coordinate ring of the dense torus of $\cX$
obtained by removing the anti-canonical divisor $s^{-1}(0)$
from $\cX$.

Let
$$
 Y' := \{ (\bsz, u, v) \in (\bCx)^d \times \bC^2 \mid W(\bsz) = u v \}
$$
be the fiber of the double suspension of $W$.


\begin{theorem}[{Seidel \cite[Theorem 1]{Seidel_suspension}}]
 \label{th:Seidel_suspension}
Assume that the derived Fukaya-Seidel category $D^b \scF(W)$ is quasi-equivalent
to the derived category of coherent sheaves on $\scX$ as an $A_\infty$-category.
Then there exists a full embedding
$$
 D^b \coh_0 \scK \hookrightarrow D^b \cF(Y')
$$
of triangulated categories,
where $D^b \coh_0 \scK$ is the full subcategory of $D^b \coh \scK$
consisting of complexes whose cohomologies
are supported on the zero-section, and
$\cF(Y')$ is the Fukaya category of $Y'$.
\end{theorem}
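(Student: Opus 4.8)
This is the main theorem of Seidel \cite{Seidel_suspension}; the plan is to present both sides as $\mathrm{Perf}$ of the \emph{same} trivial (square-zero) extension $A_\infty$-algebra, feeding the hypothesis in through the functoriality of that extension. On the coherent side I would present $D^b \coh_0 \scK$ via a generator. Let $\iota : \scX \hookrightarrow \scK$ be the zero section and $(E_1, \dots, E_m)$ a full exceptional collection of line bundles on $\scX$. Since the $E_i$ generate $D^b \coh \scX$ and every coherent sheaf on $\scK$ supported on $\scX$ is an iterated extension of sheaves pushed forward from $\scX$, the objects $\iota_* E_i$ generate $D^b \coh_0 \scK$. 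One then computes the derived endomorphism algebra $\mathcal{D} := \bigoplus_{i,j} \Ext^\bullet_\scK(\iota_* E_i, \iota_* E_j)$ using the adjunction $\iota^{!} \dashv \iota_*$, the identity $L\iota^* \iota_* E_j \cong E_j \oplus (E_j \otimes \omega_\scX^{-1})[1]$ (the normal bundle of the zero section being $\omega_\scX$) and $\omega_\scK \cong \scO_\scK$; Serre duality on $\scX$ then gives
\[ \Ext^k_\scK(\iota_* E_i, \iota_* E_j) \cong \Ext^k_\scX(E_i, E_j) \oplus \Ext^{d+1-k}_\scX(E_j, E_i)^\vee . \]
So $\mathcal{D}$ is the trivial extension $\mathcal{E} \ltimes \mathcal{E}^\vee[-(d+1)]$ of $\mathcal{E} := \bigoplus_{i,j}\Ext^\bullet_\scX(E_i, E_j)$ by its dual bimodule, shifted by $d+1 = \dim_\bC \scK$, and $D^b \coh_0 \scK \simeq \mathrm{Perf}(\mathcal{D})$. (If $(E_i)$ is cyclic, $\mathcal{E}$ is a finite-dimensional algebra in degree $0$ and $\mathcal{D}$ carries no higher operations; in general it is the $A_\infty$ trivial extension.)

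\textbf{The symplectic side.} By construction $Y'$ is a regular fiber of the double suspension $\widetilde W(\bsz, u, v) = W(\bsz) + uv$, whose critical points are those of $W$ placed at $u = v = 0$; to each is associated a vanishing cycle, an exact Lagrangian sphere $\widetilde L_i \cong S^{d+1} \subset Y'$, obtained by joining the Lefschetz thimble $\Delta_i$ of $W$ with the vanishing cycle of the quadratic form $uv$. Seidel's theorem identifies, at the $A_\infty$ level, the full subcategory of $\cF(Y')$ on $\{\widetilde L_i\}$: its endomorphism algebra is quasi-isomorphic to the trivial extension $\mathcal{C} \ltimes \mathcal{C}^\vee[-(d+1)]$ of the directed $A_\infty$-algebra $\mathcal{C} := \bigoplus_{i,j}\Hom_{\scF(W)}(\Delta_i, \Delta_j)$ of the thimbles. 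I would reprove this following Seidel: a Picard--Lefschetz local model near each critical point, a K\"unneth-type formula for how Floer complexes behave under the operation $\{W = uv\}$ --- a twofold suspension, the symplectic counterpart of Kn\"orrer periodicity --- and the observation that suspension upgrades the ``triangular'' Floer theory of the thimbles to a cyclic one, the lower triangle being supplied by Poincar\'e duality on the spheres, which is exactly the dual-bimodule summand.

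\textbf{Putting it together.} By the standing hypothesis the $A_\infty$-equivalence $D^b\scF(W) \simeq D^b\coh\scX$ matches the thimbles $\Delta_i$ with the $E_i$, hence yields an $A_\infty$-quasi-isomorphism $\mathcal{C} \simeq \mathcal{E}$. The trivial-extension construction $\mathcal{B} \mapsto \mathcal{B} \ltimes \mathcal{B}^\vee[-(d+1)]$ is functorial in the $A_\infty$-algebra $\mathcal{B}$ (the dual bimodule and the square-zero extension are), so it carries this quasi-isomorphism to $\mathcal{C} \ltimes \mathcal{C}^\vee[-(d+1)] \simeq \mathcal{D}$; combined with the first two steps, $\bigoplus_{i,j}\Hom_{\cF(Y')}(\widetilde L_i, \widetilde L_j) \simeq \mathcal{D}$ as $A_\infty$-algebras. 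Then the functor sending a perfect $\mathcal{D}$-module to the corresponding twisted complex of the $\widetilde L_i$ identifies $D^b\coh_0\scK \simeq \mathrm{Perf}(\mathcal{D})$ with the thick subcategory generated by $\{\widetilde L_i\}$ in $D^b\cF(Y')$; the inclusion of this full subcategory gives the asserted embedding.

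\textbf{The main obstacle.} Essentially all the work is in the symplectic step --- Seidel's suspension theorem itself: making the $A_\infty$-level identification with the trivial extension precise, with the right gradings and signs, and controlling the holomorphic polygons produced by the suspension. For the transport of the hypothesis one more point needs care: the mirror-symmetry equivalence for $\scX$ must be natural enough --- compatible with the dual bimodule, equivalently with the Serre functor --- for it to commute with the trivial-extension operation, so that $\mathcal{C} \ltimes \mathcal{C}^\vee[-(d+1)] \simeq \mathcal{E} \ltimes \mathcal{E}^\vee[-(d+1)]$ really holds. The rest --- generation of $D^b\coh_0\scK$ by the $\iota_* E_i$ with the stated endomorphism algebra, and the Calabi--Yau shift being $\dim_\bC Y' = d+1$ --- is routine.
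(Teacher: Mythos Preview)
The paper does not prove this theorem; it is quoted as Seidel's result \cite[Theorem~1]{Seidel_suspension} and used as a black box for the remainder of Section~\ref{sc:hms}. Your sketch is a faithful outline of Seidel's own argument---present both $D^b\coh_0\scK$ and the full subcategory of $\cF(Y')$ on the vanishing spheres as $\mathrm{Perf}$ of the trivial extension $\mathcal{B}\ltimes\mathcal{B}^\vee[-(d+1)]$, the coherent side via the Koszul/Serre-duality computation of $\Ext^\bullet_\scK(\iota_*E_i,\iota_*E_j)$, the symplectic side via the suspension analysis, and then transport the hypothesis through functoriality of the trivial-extension construction---so there is nothing in the present paper to compare it against. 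Your caveat about needing Serre-functor compatibility of the assumed equivalence $D^b\scF(W)\simeq D^b\coh\scX$ is exactly the point Seidel has to address, and is the right place to flag the main technical burden.
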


The manifold $Y'$ is an affine algebraic variety,
and hence a Stein manifold of finite type,
so that it is symplectomorphic to the completion $\hat{N}$
of the Liouville domain $N$
obtained by intersecting it with a sufficiently large ball.
Consider the projection
\begin{align*}
\begin{array}{cccc}
 \varpi : & N & \to & (\bCx)^d \\
  & \vin & & \vin \\
  & (\bsz, u, v) & \mapsto & \bsz
\end{array}
\end{align*}
and set
$
 E = N \cap \varpi^{-1}(S).
$
The restriction $\varpi|_E : E \to S$ of the projection,
which we will write $\varpi$ by abuse of notation,
is a compact convex Lefschetz fibration,
whose discriminant locus is $S_0 := W^{-1}(0)$.
The completion $\hat{E}$ of $E$,
obtained by first completing in the direction of the fiber of $\varpi$
and then extending it to the completion of the base $S$,
is symplectomorphic to $Y'$.

We define the Lagrangian submanifolds $L_i \subset E$
which are $\bR$-fibrations
over the Lefschetz thimbles
$\Delta_i \subset S$
as in Section \ref{sec:Lagr_submfd_over_paths}.
When $W = z + 1/z^n$,
the Lagrangian submanifold $L_0$ is given by
\begin{align}
 L_0 := \lc (z, u, v) \in E \relmid z, u, v \in \bR^{> 0} \rc,
\end{align}
and other Lagrangian submanifolds
$L_i$ are given by $L_i = \zeta_{n+1}^i \cdot L_0$,
where
$\zeta_{n+1}^i$
acts on $E$
by sending $(z, u, v)$ to $(\zeta_{n+1}^i z, \zeta_{n+1}^{i} u, v)$.

Recall from \cite[Definition 2.21]{MR2497314}
that a {\em Lefschetz admissible Hamiltonian}
is a smooth function $H(\bsz, u, v)$ on $Y'$
which can be written,
outside of a compact set,
as the sum
$H_b(\bsz) + H_f(u, v)$
of admissible Hamiltonians $H_b(\bsz)$ and $H_f(u, v)$
on the base and the fiber respectively.
The map $\varpi$ is not an honest Lefschetz fibration
but a Bott-Morse analogue of a Lefschetz fibration in general.
It is an honest Lefschetz fibration when
$W = z + 1/z^n$
for dimensional reason.

The {\em Lefschetz wrapped Floer cohomology}
is defined as the direct limit
$$
 \HW_l(\hat{L}_i, \hat{L}_j) := \varinjlim_{w} \HF(\phi_w(\hat{L}_i), \hat{L}_j)
$$
where $\phi_w : Y' \to Y'$ is the time $w$ flow
defined by the Hamiltonian $H$.
Set
\begin{align*}
 \scrA_w := \bigoplus_{i, j=1}^m \HF(\phi_w(\hat{L}_i), \hat{L}_j)
\end{align*}
and
\begin{align*}
 \scrA := \varinjlim_{w \to \infty} \scrA_w
  = \bigoplus_{i, j=1}^m \HW_l(\hat{L}_i, \hat{L}_j).
\end{align*}
The map
$$
 \phi_{w+1, w}^\scrA :
  \HF(\phi_{w}(\hat{L}_i), \hat{L}_j) \to \HF(\phi_{w+1}(\hat{L}_i), \hat{L}_j)
$$
in the definition of the inductive limit
is the continuation map
for the Hamiltonian diffeomorphism $\phi_1 : Y' \to Y'$.
The universal map to the inductive limit is denoted by
$$
 \phi_{\infty,w}^\scrA : \HF(\phi_w(\hat{L}_i), \hat{L}_j) \to \scrA.
$$
The $\bC$-vector space $\scrA$ has a ring structure
coming from
\begin{align*}
\begin{array}{ccc}
 \HF(\phi_{w_2} (\hat{L}_j), \hat{L}_k) \otimes \HF(\phi_{w_1} (\hat{L}_i), \hat{L}_j)
  & \to & \HF(\phi_{w_1+w_2} (\hat{L}_i), \hat{L}_j) \\
 \vin & & \vin \\
 q \otimes r & \mapsto & \frakm_2(q, (\phi_{w_2})_*(r)).
\end{array}
\end{align*}
One can show the ring isomorphism
\cite[Theorem A.2]{Chan-Pomerleano-Ueda_conifold}
\begin{align*}
 \scrA \cong \bigoplus_{i_0,i_1=1}^n \HW(\hat{L}_{i_0}, \hat{L}_{i_1})
\end{align*}
with the ordinary wrapped Floer cohomology
along the lines of \cite[Theorem 2.22]{MR2497314}.


\begin{figure}[htbp]
\begin{minipage}{.5 \linewidth}
\centering
\input{wrapping1.pst}
\caption{Wrapping once}
\label{fg:wrapping1}
\end{minipage}
\begin{minipage}{.5 \linewidth}
\centering
\scalebox{.95}{
\input{wrapping2.pst}
}
\caption{Wrapping twice}
\label{fg:wrapping2}
\end{minipage}
\end{figure}

Note that the monodromy of the conic fibration
$\varpi : E \to S$
around the discriminant $S_0$
is given by the Dehn twist
along the vanishing cycle,
which is inverse to the wrapping on the fiber.
It follows that
for a positive integer $w$,
the intersection points
$\phi_w(\hat{L}_{i_0}) \cap \hat{L}_{i_1}$
can be parametrized as
$q_j$ for some $q \in \sigma^w(\Delta_{i_0}) \cap \Delta_{i_1}$
and an integer $j \in [0, \ord(q)+w]$
indicating the position in the fiber of $\varpi$
counted from the top
as shown in Figures \ref{fg:wrapping1} and \ref{fg:wrapping2}.
The continuation map
$\phi_{w+1, w}^\scrA : \scrA_w \to \scrA_{w+1}$
is written as
$\phi_{w+1, w}^\scrA(q_j) = \frakt(q)_{j+1}$
in this parametrization.
We write $q = \frakt^{\ord (q)}(q')$, and
consider the $\bC$-linear map
\begin{align} \label{eq:psi}
\begin{array}{cccc}
 \psi_w : & \HF(\phi_w (\hat{L}_{i_0}), \hat{L}_{i_1})
  & \to & \bigoplus_{k=0}^{2w} \Hom_\scX(E_{i_0} \otimes \omega_\scX^k, E_{i_1}) \\
 & \vin & & \vin \\
 & q_j & \mapsto & (s-1)^j Q'
\end{array}
\end{align}
where $Q' \in \Hom_\cX(E_{i_0} \otimes \omega_\cX^{w-\ord(q)}, E_{i_1})$
corresponds to $q' \in \Hom_{\cF(S, S_*)} \lb \sigma^{w-\ord(q)}(\Delta_{i_0}), \Delta_{i_1} \rb$
under the ring isomorphism in \pref{th:ring_isom}.

Let
$$
 \scrB := \varinjlim_{w \to \infty} \scrB_w, \quad
 \scrB_w := \bigoplus_{k=0}^{2w} \bigoplus_{i,j=0}^n \Hom_\cX(E_i \otimes \omega_\cX^k, E_j)
$$
be the direct limit of the right hand side of \eqref{eq:psi}
with respect to the map
$$
 \phi_{w+1, w}^\scrB : \scrB_w \to \scrB_{w+1}
$$
given by the multiplication by $1-s$
$$
 \bigoplus_{k=0}^{2w} \bigoplus_{i,j=0}^n \Hom_\cX(E_i \otimes \omega_\cX^k, E_j)
  \xto{1-s} \bigoplus_{k=0}^{2w+1} \bigoplus_{i,j=0}^n \Hom_\cX(E_i \otimes \omega_\cX^k, E_j)
$$
followed by the inclusion
$$
 \bigoplus_{k=0}^{2w+1} \bigoplus_{i,j=0}^n \Hom_\cX(E_i \otimes \omega_\cX^k, E_j)
  \hookrightarrow \bigoplus_{k=0}^{2w+2} \bigoplus_{i,j=0}^n \Hom_\cX(E_i \otimes \omega_\cX^k, E_j).
$$
The multiplication in $\scrB$ is defined by
\begin{align*}
\begin{array}{ccc}
 \scrB_{w_2} \otimes \scrB_{w_1} & \to & \scrB_{w_1+w_2} \\
 \vin & & \vin \\
 Q \otimes R & \mapsto & Q \cdot R.
\end{array}
\end{align*}

\begin{proposition} \label{pr:ring_isom4}
One has an isomorphism
\begin{align}
 \scrB \simto \bigoplus_{i,j=0}^n \Hom_\cYv(\pi^* E_i|_\cYv, \pi^* E_j|_\cYv)
\end{align}
of rings.
\end{proposition}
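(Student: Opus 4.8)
The key observation is that the right-hand side of the claimed isomorphism is, like $\scrB$, the colimit of a direct system built from one module by repeated multiplication by $1-s$. Let $\pi : \scK \to \cX$ be the bundle projection and set $\mathcal{C} := \bigoplus_{i,j=0}^{n}\Hom_\scK(\pi^{*}E_{i},\pi^{*}E_{j})$, a ring under composition of morphisms; we will see that $1-s$ makes sense as a central non-zero-divisor in $\mathcal{C}$. The plan is to prove the two ring isomorphisms
\[
 \scrB \;\cong\; \mathcal{C}\big[(1-s)^{-1}\big]
  \;\cong\; \bigoplus_{i,j=0}^{n}\Hom_\cYv\big(\pi^{*}E_{i}|_\cYv,\, \pi^{*}E_{j}|_\cYv\big)
\]
and compose them.

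For the first isomorphism, recall that $\scK$ is the total space of $\omega_\cX$, so $\pi$ is affine and $\pi_{*}\scO_\scK = \bigoplus_{k\ge 0}\omega_\cX^{-k}$. The projection formula gives, compatibly with composition (which on both sides is multiplication of sections),
\[
 \Hom_\scK(\pi^{*}E_{i},\pi^{*}E_{j})
  = H^{0}\big(\scK,\pi^{*}\scO_\cX(j-i)\big)
  = \bigoplus_{k\ge 0}H^{0}\big(\cX,\scO_\cX(j-i)\otimes\omega_\cX^{-k}\big)
  = \bigoplus_{k\ge 0}\Hom_\cX\big(E_{i}\otimes\omega_\cX^{k},E_{j}\big),
\]
so $\mathcal{C}$ is precisely $\bigoplus_{i,j=0}^{n}\bigoplus_{k\ge 0}\Hom_\cX(E_{i}\otimes\omega_\cX^{k},E_{j})$, with the section $s$ acting as a central element and $1-s$ as the operator occurring in the transition maps $\phi_{w+1,w}^{\scrB}$. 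Since $\scK$ is integral, $1-s$ is a non-zero-divisor on $\mathcal{C}$, and the colimit of $\mathcal{C}\xrightarrow{1-s}\mathcal{C}\xrightarrow{1-s}\cdots$ is the localization $\mathcal{C}[(1-s)^{-1}]$. The extra truncation in the definition of $\scrB$ (each $\scrB_{w}$ keeps only the summands of $\omega_\cX$-degree $\le 2w$, while $1-s$ raises this degree by at most one) does not change the colimit: the maps $\scrB_{w}\to\mathcal{C}[(1-s)^{-1}]$, $x\mapsto x/(1-s)^{w}$, are compatible with the transition maps, surjective (enlarge $w$ so that the numerator becomes a truncation summand), and injective (if $x/(1-s)^{w}=0$ then $(1-s)^{N}x=0$ for some $N$, so $x$ dies in $\scrB_{w+N}$). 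Hence $\scrB\cong\mathcal{C}[(1-s)^{-1}]$ as rings.

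For the second isomorphism, by the construction of the mirror one has $\cYv=\scK\setminus D$, where $D$ is the anti-canonical divisor $\{1-s=0\}$, analogously to the description of $\Yv$ recalled in \pref{sec:SYZ}; here $s$ is viewed on $\scK$ through $\pi_{*}\scO_\scK=\bigoplus_{k}\omega_\cX^{-k}$, its degree-one piece $H^{0}(\cX,\omega_\cX^{\vee})$ containing the section $s$ defining the natural transformation \eqref{eq:nat_transf2}. Since $D$ is effective and principal, $\scO_\scK(ND)\cong\scO_\scK$ for all $N\ge 0$, so for any coherent sheaf $\mathcal{F}$ on $\scK$ the standard description of sections on the complement of an effective Cartier divisor — using that cohomology commutes with filtered colimits on the Noetherian stack $\scK$ — gives
\[
 H^{0}\big(\cYv,\mathcal{F}|_\cYv\big)
  = \varinjlim_{N}H^{0}\big(\scK,\mathcal{F}(ND)\big)
  = \varinjlim\big(H^{0}(\scK,\mathcal{F})\xrightarrow{1-s}H^{0}(\scK,\mathcal{F})\xrightarrow{1-s}\cdots\big)
  = H^{0}(\scK,\mathcal{F})\big[(1-s)^{-1}\big].
\]
Taking $\mathcal{F}=\pi^{*}\scO_\cX(j-i)$, summing over $0\le i,j\le n$, and noting that the multiplications of sections agree on both sides, we obtain $\bigoplus_{i,j}\Hom_\cYv(\pi^{*}E_{i}|_\cYv,\pi^{*}E_{j}|_\cYv)\cong\mathcal{C}[(1-s)^{-1}]$ as rings. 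Composing with the first isomorphism proves the proposition.

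The only non-formal ingredient is the description of $\cYv$ used above: one must confirm that the anti-canonical divisor deleted in the construction of the mirror for $Y'$ is precisely $\{1-s=0\}$, with $s$ the section of $\omega_\cX^{\vee}$ defining \eqref{eq:nat_transf2} (equivalently, the distinguished monomial whose zero locus is the toric boundary of $\scK$); this is a direct comparison with the mirror construction in \pref{sec:SYZ}. The rest is bookkeeping: checking that the truncations in the definition of $\scrB$ are harmless and that each identification above is a homomorphism of rings.
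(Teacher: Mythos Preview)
Your proof is correct and follows essentially the same route as the paper: identify $\bigoplus_{i,j}\Hom_\scK(\pi^*E_i,\pi^*E_j)$ with $\bigoplus_{i,j}\bigoplus_{k\ge 0}\Hom_\cX(E_i\otimes\omega_\cX^k,E_j)$ via adjunction and the projection formula, then pass to the complement of the principal divisor $\{1-s=0\}$ using the colimit formula $\Hom(M|_\cYv,N|_\cYv)=\varinjlim(\Hom(M,N)\xrightarrow{1-s}\cdots)$ (which the paper cites from Seidel). Your organization via the intermediate localization $\mathcal{C}[(1-s)^{-1}]$ and your explicit check that the truncation $k\le 2w$ in the definition of $\scrB_w$ is harmless in the limit are spelled out in more detail than in the paper, but the argument is the same.
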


\begin{proof}
By combining
\begin{align*}
 \bigoplus_{i,j=0}^n \Hom_{\cK}(\pi^* E_i, \pi^* E_j)
  &\cong \bigoplus_{i,j=0}^n \Hom_{\cX}(E_i, \pi_* \pi^* E_j) \\
  &\cong \bigoplus_{i,j=0}^n \Hom_{\cX}(E_i, E_j \otimes \pi_* \cO_\cK) \\
  &\cong \bigoplus_{i,j=0}^n \Hom_{\cX}
   \lb E_i, E_j \otimes \lb \bigoplus_{k=0}^\infty \omega_\cX^{-k} \rb \rb \\
  &\cong \varinjlim_{w \to \infty}
   \bigoplus_{k=0}^{w} \bigoplus_{i,j=0}^n \Hom_\cX(E_i \otimes \omega_\cX^k, E_j)
\end{align*}
with
\begin{align*}
 \Hom(M|_\cYv, N|_\cYv) = \varinjlim \lb \Hom(M, N) \xto{1-s} \Hom(M, N) \xto{1-s} \cdots \rb
\end{align*}
for any objects $M$ and $N$ of $D^b \coh \cK$
(cf. \cite[(1.13)]{Seidel_ASNT}),
one obtains an isomorphism
$$
 \bigoplus_{i,j=0}^n \Hom_{\cYv}(\pi^* E_i|_{\cYv}, \pi^* E_j|_{\cYv})
  \cong \varinjlim_{w \to \infty}
   \bigoplus_{k=0}^{2w} \bigoplus_{i,j=0}^n \Hom_\cX(E_i \otimes \omega_\cX^k, E_j)
$$
of $\bC$-vector spaces.
The multiplication in $\scrB$ commutes with this $\bC$-linear isomorphism,
and \pref{pr:ring_isom4} is proved.
\end{proof}

\begin{proposition} \label{pr:comp}
The maps $\psi_w$ are compatible with the composition,
i.e.,
\begin{align} \label{eq:comp}
 \psi_{w_1+w_2}(\frakm_2(q_j, r_k))
  = \psi_{w_2}(q_j) \cdot \psi_{w_1}(r_k)
\end{align}
for any $q_j \in \phi_{w_2}(\hat{L}_{i_1}) \cap \hat{L}_{i_2}$
and $r_k \in \phi_{w_1}(\hat{L}_{i_0}) \cap \hat{L}_{i_1}$.
\end{proposition}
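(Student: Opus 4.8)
The plan is to compute both sides of \eqref{eq:comp} by counting holomorphic triangles in $E$, matching contributions after projecting to the base $S$. A holomorphic triangle $\varphi\colon D^2\to E$ contributing to $\frakm_2(q_j,r_k)$, with corners at $q_j$, $r_k$ and some $p_\ell$, composes with the holomorphic Lefschetz fibration $\varpi\colon E\to S$ to a holomorphic triangle $\varpi\circ\varphi$ in $S$ with corners at the base points $q$, $r$, $p$, contributing to $\frakm_2(q,r)$ in $\cA$; conversely $\varphi$ is recovered from $\varpi\circ\varphi$ together with a holomorphic section of the pulled-back conic bundle over $D^2$ compatible with the $\bR$-fibration boundary conditions of the $L_i$. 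By \pref{th:ring_isom}, under which $\frakt$ corresponds to multiplication by the toric boundary section $s$, the base product $\frakm_2(q,r)$ corresponds in $\cB$ to the monomial $QR=s^{\ord(q)+\ord(r)}\,Q'R'$, where $Q'R'$ is the product of the reductions; hence the base corner of any triangle contributing to $\frakm_2(q,r)$ is a fixed intersection point $p$ with $\ord(p)=\ord(q)+\ord(r)+c$, where $c:=\ord(Q'R')$. By \pref{pr:degree}, every such base triangle meets the divisor $S_0=W^{-1}(0)$ in exactly $\ord(p)-\ord(q)-\ord(r)=c\ge 0$ points.

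Next I would reduce \eqref{eq:comp} to a binomial identity. Since $\cX=\bP(1,n)$ has $\omega_\cX=\scO_\cX(-n-1)$ and the defining section of its toric boundary is $s=xy\in H^0(\scO_\cX(n+1))$, we have $Q'R'=(xy)^cP'=s^cP'$ with $P'$ corresponding to $p'$, so the right-hand side of \eqref{eq:comp} equals
$$ \psi_{w_2}(q_j)\cdot\psi_{w_1}(r_k)=(s-1)^jQ'\cdot(s-1)^kR'=(s-1)^{j+k}s^cP'. $$
On the left-hand side $\psi_{w_1+w_2}(p_\ell)=(s-1)^\ell P'$, so it suffices to show that the holomorphic triangles in $E$ lifting a base triangle of $S_0$-intersection number $c$ contribute, with the signs fixed by the grading conventions, exactly $\sum_{i=0}^c\binom{c}{i}\,p_{j+k+i}$ to $\frakm_2(q_j,r_k)$; indeed then
$$ \psi_{w_1+w_2}(\frakm_2(q_j,r_k))=\sum_{i=0}^c\binom{c}{i}(s-1)^{j+k+i}P'=(s-1)^{j+k}\bigl((s-1)+1\bigr)^cP'=(s-1)^{j+k}s^cP', $$
and summing over the base triangles, whose contributions assemble to $Q'R'$, yields \eqref{eq:comp}.

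The count of lifts is the main obstacle, and it is where I expect to have to work hardest. I would examine a neighbourhood of each of the $c$ nodal fibers of $\varpi$ over the points of $\varphi(D^2)\cap S_0$, where the conic bundle has the standard local form $uv=z$ over a disk and the monodromy of $\varpi$ is the Dehn twist along the vanishing cycle, which is inverse to the wrapping in the fiber (cf.\ the discussion preceding \pref{fg:wrapping1} and \pref{fg:wrapping2}). At each such fiber a holomorphic section of the pulled-back conic bundle has a binary choice — whether $u$ or $v$ acquires the simple zero — which either leaves the fiber position unchanged or raises it by one relative to the positions recorded at $q_j$ and $r_k$; the $c$ choices being independent, one obtains $\binom{c}{i}$ lifts with output $p_{j+k+i}$ and no others. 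Carrying this out rigorously — in particular pinning down the orientations so that the $2^c$ contributions all count with the same sign, and excluding broken or fibre-degenerate configurations — is a local Floer-theoretic computation in the conic bundle of the same kind as those of Pascaleff \cite{Pascaleff_FCMPP} and Abouzaid \cite{Abouzaid_HCRMSTV,Abouzaid_MHTGHMSTV}, and is the delicate step.
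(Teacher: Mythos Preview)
Your proposal is correct and follows essentially the same approach as the paper. The paper streamlines your ``delicate step'' by invoking \cite[Proposition~4.4]{Pascaleff_FCMPP} directly for the binomial formula $\frakm_2(q_j,r_k)=\sum_{t=0}^{c}\binom{c}{t}\,p_{j+k+t}$, and your local picture of the binary $u/v$ choice at each of the $c$ nodal fibers is exactly the mechanism behind that result.
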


\begin{proof}
Since the Lagrangian submanifolds $\hat{L}_i$ are fibered over $\Delta_i$,
any holomorphic triangle in $\varphitilde : D^2 \to E$
contributing to $\frakm_2(q_j, r_k)$ projects
to a holomorphic triangle $\varphi = \varpi \circ \varphitilde : D^2 \to S$
contributing to $\frakm_2(q, r)$.
Given a holomorphic triangle $\varphi : D^2 \to S$
contributing to $p = \frakm_2(q, r)$,
the contributions to $\frakm_2(q_j, r_k)$
of holomorphic triangles $\varphitilde : D^2 \to E$
projecting to $\varphi$ is computed by Pascaleff
\cite[Proposition 4.4]{Pascaleff_FCMPP}
as
\begin{align*}
 \frakm_2(q_j, r_k) = \sum_{t=0}^\ell \binom{\ell}{t} p_{j+k+t},
\end{align*}
where $\ell = \varphi(D^2) \cdot S_0$ is the intersection number
of the triangle and the discriminant $S_0$ of the fibration $\varpi : E \to S$.
It follows that
\begin{align*}
 \psi_{w_1+w_2}(\frakm_2(q_j, r_k))
  &= \sum_{t=0}^\ell \binom{\ell}{t} \psi_{w_1+w_2}(p_{j+k+t}) \\
  &= \sum_{t=0}^\ell \binom{\ell}{t} (s-1)^{j+k+t} P'.
\end{align*}
On the right hand side of \eqref{eq:comp},
one has
\begin{align*}
 \psi_{w_2}(q_j) \cdot \psi_{w_1}(r_k)
  &= (s-1)^j Q' \cdot (s-1)^k R' \\
  &= (s-1)^{j+k} Q' R' \\
  &= (s-1)^{j+k} s^{\ell'} P' \\
  &= \sum_{t=0}^{\ell'} \binom{\ell'}{t} (s-1)^{j+k+t} P'
\end{align*}
where $\ell' = \ord(p)-\ord(q)-\ord(r)$.
Now one has $\ell = \ell'$ by \pref{pr:degree},
and \pref{pr:comp} is proved.
\end{proof}

\begin{proposition} \label{pr:ring_isom3}
The maps $\psi_w$ induce an isomorphism
$
 \psi : \scrA \simto \scrB
$
of rings.
\end{proposition}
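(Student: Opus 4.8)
The plan has three steps: first, to check that $\{\psi_w\}_{w\ge 0}$ is a morphism of direct systems, so that $\psi = \varinjlim_w \psi_w$ is a well-defined $\bC$-linear map $\scrA\to\scrB$; second, to deduce multiplicativity of $\psi$ from \pref{pr:comp}; and third, to establish bijectivity by comparing bases of the two colimits.

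For the first step the key observations are that $\ord(\frakt(q)) = \ord(q)+1$ and $(\frakt(q))' = q'$: the continuation map $\frakt$ corresponds, under the isomorphism of \pref{th:ring_isom}, to multiplication by the section $s$, which is injective on $\bigoplus_{k\ge 0}\Hom_\cX(E_{i_0}\otimes\omega_\cX^k,E_{i_1})$, so the ``primitive part'' $q'$ of an intersection point is unambiguous and is unchanged by $\frakt$. Then evaluating $\psi_{w+1}$ on $\phi^\scrA_{w+1,w}(q_j) = \frakt(q)_{j+1}$ and evaluating $\phi^\scrB_{w+1,w}$ on $\psi_w(q_j)$ both yield a fixed scalar times $(s-1)^{j+1}Q'$, where $Q'$ is the same algebraic morphism now regarded at level $w+1$; hence $\psi_{w+1}\circ\phi^\scrA_{w+1,w} = \phi^\scrB_{w+1,w}\circ\psi_w$. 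For the second step, the ring structures on $\scrA$ and $\scrB$ are those induced on the colimits, so multiplicativity of $\psi$ is exactly \pref{pr:comp} passed to the limit.

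For the third step, all the transition maps are injective: $\phi^\scrA_{w+1,w}$ because $\frakt$ is injective and $q_j\mapsto\frakt(q)_{j+1}$ only shifts the fibre index, and $\phi^\scrB_{w+1,w}$ because multiplication by $1-s$ is injective. Thus $\scrA = \bigcup_w\scrA_w$ and $\scrB = \bigcup_w\scrB_w$ are filtered unions, and it suffices to match up bases. On the $\scrA$-side, $\phi^\scrA_{w+1,w}$ sends $q_j$ to $\frakt(q)_{j+1}$, which preserves the primitive part $q'$ (equivalently the morphism $Q'\colon E_{i_0}\otimes\omega_\cX^{\ell}\to E_{i_1}$, $\ell = w-\ord(q)$) and the integer $j-w$; conversely every pair consisting of a primitive morphism $Q'$ and an integer $m\in\bZ$ is realised, as $[q_j]$ with $j-w = m$, at a sufficiently high level. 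Hence the stable classes $[q_j]$ form a $\bC$-basis of $\scrA$ indexed by such pairs $(Q',m)$. On the $\scrB$-side, \pref{pr:ring_isom4} identifies $\scrB$ with $\bigoplus_{i,j}R_{ij}[1/(1-s)]$, where $R_{ij} = \bigoplus_{k\ge 0}\Hom_\cX(E_i\otimes\omega_\cX^k, E_j)$; for $\cX = \bP(1,n)$ the $\bC[s]$-module $R_{ij}$ is free on the ``primitive'' morphisms, as one sees from the monomial decomposition $x^a y^b = (xy)^{\min(a,b)}\cdot(\text{a monomial not divisible by }xy)$ in $\bC[x,y]$. Since $\{(1-s)^m\}_{m\in\bZ}$ is a $\bC$-basis of $\bC[s][1/(1-s)]$, the elements $(1-s)^m Q'$ ($Q'$ primitive, $m\in\bZ$) form a $\bC$-basis of $\scrB$. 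Finally one checks that $\psi$ carries $[q_j]$ to a nonzero scalar multiple of $(1-s)^{\,j-w}Q'$, hence sends the first basis bijectively onto the second; therefore $\psi$ is a bijective ring homomorphism, which proves \pref{pr:ring_isom3}.

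The bulk of the work lies in the third step, and within it in keeping track of the two nested direct limits: one must match the truncated summands $\bigoplus_{k=0}^{2w}$ of $\scrB_w$ with the colimit description of $\bigoplus_{i,j}\Hom_\cYv(\pi^*E_i|_\cYv,\pi^*E_j|_\cYv)$ used in \pref{pr:ring_isom4}, and keep the powers of $1-s$ synchronised on the two sides. The $\bC[s]$-freeness of $R_{ij}$, though elementary, is precisely what makes the ``primitive versus $s$-divisible'' decomposition on the $\scrB$-side match the order function $\ord(\cdot)$ on the $\scrA$-side.
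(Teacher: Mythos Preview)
Your proof is correct and follows the paper's approach for the first two steps (compatibility with transition maps and multiplicativity via \pref{pr:comp}), but takes a genuinely different route for bijectivity. The paper argues more softly: for surjectivity it takes an arbitrary element of $\scrB$, multiplies by a high enough power of $(s-1)$ to clear denominators and land in a truncated summand $\bigoplus_{k=0}^{w_2}\Hom_\cX(E_i\otimes\omega_\cX^k,E_j)$, and then observes this lies in the image of some $\psi_{w_1+w_2}$; for injectivity it simply notes that each $\psi_w$ is injective and each transition $\phi^\scrB_{w+1,w}$ is injective, so $\psi\circ\phi^\scrA_{\infty,w}=\phi^\scrB_{\infty,w}\circ\psi_w$ is injective, which forces $\psi$ to be injective. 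By contrast, you construct explicit $\bC$-bases on both sides indexed by pairs (primitive morphism, integer) and exhibit $\psi$ as a bijection between them. Your argument is more concrete and makes the structure of $\scrA$ and $\scrB$ very transparent, at the cost of needing the $\bC[s]$-freeness of $R_{ij}$ (specific to $\bP(1,n)$); the paper's argument is slicker and would transplant more readily to other $\cX$, since it never unpacks a basis of $\scrB$.

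One small remark: in your first step, the two computations actually differ by a sign, since $\phi^\scrB_{w+1,w}$ is multiplication by $1-s$ while $\psi_{w+1}(\frakt(q)_{j+1})=(s-1)^{j+1}Q'$. The paper's own displayed computation glosses over the same sign, and it is harmless (absorb a factor $(-1)^w$ into $\psi_w$), but your phrase ``a fixed scalar times'' is doing real work there and you might prefer to make the normalisation explicit.
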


\begin{proof}
The maps $\psi_w$ induce a map
$
 \psi : \scrA \to \scrB
$
between inductive limits
since
\begin{align*}
 \psi_{w+1}(\phi_{w+1, w}^\scrA(q_i))
  &= \psi_{w+1}(q_{i+1}) \\
  &= (s-1)^{i+1} \cdot Q' \\
  &= \phi_{w+1, w}^\scrB((s-1)^i \cdot Q') \\
  &= \phi_{w+1, w}^\scrB( \psi_w(q_i)).
\end{align*}

For any element $q \in \Hom_\cYv(\pi^* E_i|_\cYv, \pi^* E_j|_\cYv)$,
one can take sufficiently large $w_1$
so that $(s-1)^{w_1} q$ extends to an element of
$\Hom_\cK(\pi^* E_i, \pi^* E_j)$.
Then one takes another sufficiently large $w_2$
so that $(s-1)^{w_1} q$ comes from
$
 \bigoplus_{k=0}^{w_2} \Hom_\cX(E_i \otimes \omega_\cX^k, E_j)
  \subset \Hom_\cK(\pi^* E_i, \pi^* E_j).
$
Then $q$ is in the image of
$\psi \circ \phi_{\infty, w_1+w_2}^\scrA : \scrA_{w_1+w_2} \to \scrB$,
which shows that $\psi$ is surjective.

Note that the map $\phi_{\infty, w}^\scrB : \scrB_w \to \scrB$ is injective
since each map $\phi_{w+1, w}^\scrB: \scrB_w \to \scrB_{w+1}$ is injective.
The map $\psi_w$ is also injective,
and hence the map
$
 \psi \circ \phi_{\infty, w}^\scrA = \phi_{\infty, w}^\scrB \circ \psi_w : \scrA_w \to \scrB
$
is injective. This implies the injectivity of $\psi$, and
\pref{pr:ring_isom3} is proved.
\end{proof}

Now we prove \pref{th:main2}.

\begin{proof}[Proof of \pref{th:main2}]
Recall that an object $\cE$ in a triangulated category is {\em acyclic}
if $\End^*(\cE)$ is concentrated in degree zero.
It is a {\em generator} if $\Hom(\cE, X) = 0$ implies $X \cong 0$.
An acyclic generator is called a {\em tilting object}.
Since the exceptional collection $(E_i)_{i=0}^n$ is full,
the pull-back $\bigoplus_{i=0}^n \pi^* E_i$ is a generator of $D^b \coh \cK$.
Cyclicity of $(E_i)_{i=0}^n$ implies the acyclicity of $\bigoplus_{i=0}^n \pi^* E_i$.
This shows that $\bigoplus_{i=0}^n \pi^* E_i$ is a tilting object.
It follows that the restriction $\bigoplus_{i=0}^n \pi^* E_i|_\cYv$ is a tilting object.
(In general, the restriction of a generator
to an open subset is a generator,
and the restriction of an acyclic object
to the complement of a principal divisor is acyclic.)
Morita theory for derived categories
\cite{Rickard,Bondal_RAACS}
shows that $D^b \coh \cYv$ is equivalent
to the derived category of finitely-generated modules
over $\End \lb \bigoplus_{i=0}^n \pi^* E_i|_\cYv \rb$.

The direct sum $\bigoplus_i \hat{L}_i$ is acyclic by \pref{pr:ring_isom3},
and it is a generator of $\cW'$ by definition.
It follows that $D^b \cW'$ is equivalent to
to the derived category of finitely-generated modules
over $\End \lb \bigoplus_{i=0}^n \hat{L}_i \rb$,
which is isomorphic to $\End \lb \bigoplus_{i=0}^n \pi^* E_i|_\cYv \rb$
by \pref{pr:ring_isom3}.
This concludes the proof of \pref{th:main2}.
\end{proof}

\begin{figure}
\centering
\input{thimbles7.pst}
\caption{The dual cycles}
\label{fg:thimbles7}
\end{figure}

Finally we discuss the compatibility
of Theorem \ref{th:main1} and Theorem \ref{th:main2}.
Let $(\epsilon_1, \ldots, \epsilon_n)$
be the collection of line segments on $E$
connecting points in $W^{-1}(0)$ as in Figure \ref{fg:thimbles7},
so that the intersection numbers with Lefschetz thimbles are given by
$$
 \Delta_i \cdot \epsilon_j = \delta_{ij},
  \quad i = 0, \ldots, n, \ j = 1, \ldots, n.
$$

One can choose a symplectomorphism $Y \simto Y'$
in such a way that these intersection numbers
correspond precisely to the winding numbers
in Definition \ref{df:winding_number}.
The SYZ transformations of the resulting Lagrangians in $Y$
are then given by line bundles $(\scL_i)_{i=0}^n$ on $\Yv$
satisfying
$$
 \deg \scL_i|_{E_j} = \delta_{ij}
$$
for $i = 0, \ldots, n$ and $j = 1, \ldots, n$.
One can easily see,
either from \cite[Section 2]{Kapranov-Vasserot}
or by a direct calculation,
that the endomorphism rings of $\bigoplus_{i=0}^n \pi^* E_i|_\cYv$
and $\bigoplus_{i=0}^n \cL_i$ are isomorphic,
so that there is an equivalence $D^b \coh \scYv \simto D^b \coh \Yv$
sending $\pi^* E_i|_\cYv$ to $\scL_i$.
This gives the compatibility
of Theorem \ref{th:main1} and Theorem \ref{th:main2}
discussed in Introduction.

\bibliographystyle{amsalpha}
\bibliography{bibs}

\noindent
Kwokwai Chan

Department of Mathematics,
The Chinese University of Hong Kong,
Shatin,
Hong Kong

{\em e-mail address}\ :\ kwchan@math.cuhk.edu.hk
\ \vspace{0mm} \\

\noindent
Kazushi Ueda

Department of Mathematics,
Graduate School of Science,
Osaka University,
Machikaneyama 1-1,
Toyonaka,
Osaka,
560-0043,
Japan.

{\em e-mail address}\ : \  kazushi@math.sci.osaka-u.ac.jp
\ \vspace{0mm} \\

\end{document}